\newtheorem{theorem}{Theorem}
\newtheorem{lemma}{Lemma}
\newtheorem{corollary}{Corollary}
\newtheorem{definition}{Definition}
\newtheorem{remark}{Remark}
\title{Determinants containing powers of polynomial sequences}
\subjclass[2010]{11B39, 11C20.}
\keywords {Fibonacci polynomial, Lucas polynomial, Chebyshev polynomial, determinant identity}
\thanks{
Supported by the UAEU grants: StartUp Grant 2016: G00002235}
\author{Ho-Hon Leung}
\address{Department of Mathematical Sciences, United Arab Emirates University, Al Ain, 15551, United Arab Emirates}
\email{hohon.leung@uaeu.ac.ae}
\begin{document}
\maketitle

\begin{abstract}
We derive identities for the determinants of matrices whose entries are (rising) powers of (products of) polynomials that satisfy a recurrence relation. In particular, these results cover the cases for Fibonacci polynomials, Lucas polynomials and certain orthogonal polynomials. These identities naturally generalize the determinant identities obtained by Alfred, Carlitz, Prodinger, Tangboonduangjit and Thanatipanonda. 
\end{abstract}

\section{Introduction} \label{section1}

Let $(F_n)_{n\geq 0}$ be the Fibonacci sequence. Let $m\geq 1$ and $n$ be any nonnegative integer. Let $[F^m_{n+i+j}]_{0\leq i,j\leq m}$ be the $(m+1)\times (m+1)$ matrix with entries $F_{n+i+j}$, $0\leq i,j\leq m$. In 1966, Carlitz \cite{Carlitz} derived the following determinant identity for the matrix $[F^m_{n+i+j}]_{0\leq i,j\leq m}$:
\begin{align}
\label{equation1}   \det([F^m_{n+i+j}]_{0\leq i,j\leq m}) &= (-1)^{(n+1)\binom{m+1}{2}}(F_1^m F_2^{m-1} \cdots F_m)^2 \prod_{i=0}^m \binom{m}{i}.
\end{align}This identity is related to the problems posted by Alfred \cite[p. 48]{Alfred1} in 1963 and Parker \cite[p. 303]{Parker} in 1964 respectively. Let $s$ and $k$ be any integers. Tangboonduangjit and Thanatipanonda \cite{Aek1} generalized the determinant identity (\ref{equation1}) as follows: 
\begin{align}
\label{equation2}   \det([F^m_{s+k(n+i+j)}]_{0\leq i,j\leq m})   &= (-1)^{(s+kn+1)\binom{m+1}{2}} (F^m_k F^{m-1}_{2k} \cdots F_{mk})^2 \prod_{i=0}^m \binom{m}{i}.
\end{align}Let $F_n^{\langle m \rangle}$ be the {\it rising powers} of the Fibonacci numbers defined by
\begin{align*}
 F_n^{\langle m\rangle} := F_n F_{n+1} \cdots F_{n+m-1}.
\end{align*}Prodinger \cite{Prodinger} obtained the following determinant identity for the matrix $[F_{n+i+j}^{\langle m\rangle}]_{0\leq i,j\leq m}$:
\begin{align}
\label{equation3}  \det([F_{n+i+j}^{\langle m\rangle}]_{0\leq i,j\leq m})&= (-1)^{n\binom{m+1}{2}+\binom{m+2}{3}} (F_1 F_2 \cdots F_m)^{m+1}.
\end{align}Tangboonduangjit and Thanatipanonda \cite{Aek2} generalized the determinant identity (\ref{equation3}) as follows:
\begin{align}
\label{equation4}  \det([F_{n+i+j}^{\langle m\rangle}]_{0\leq i,j\leq d-1})&= (-1)^{n\binom{d}{2}+\binom{d+1}{3}} \prod_{i=1}^{d-1} (F_i F_{m+1-i})^{d-i}  \cdot \prod_{i=d-1}^{2(d-1)} F_{n+i}^{\langle m+1-d \rangle}.
\end{align}where $d\geq 2$. It is worthwhile to note that Tangboonduangjit and Thanatipanonda \cite{Aek1, Aek2} derived the determinant identities more generally, for matrices whose entries include (rising) powers of terms that satisfy a second-order linear recurrence relation with constant coefficients. By using analogous techniques in determinant calculus, we derive determinant identities for matrices whose entries are (rising) powers of polynomials that satisfy certain recurrence relations. As corollaries, we provide determinant identities for matrices whose entries are (rising) powers of Fibonacci polynomials, Lucas polynomials and certain orthogonal polynomials. As an application, we obtain new identities in the case of Fibonacci numbers. For example, for $n\geq 1$, by Corollary \ref{corollary2}, we get 
\begin{align}
\label{new}   \det\Big(\Big[ \frac{1}{F_{n+i+j}} \Big]_{0\leq i, j\leq m}\Big) &= \frac{(-1)^{n \binom{m+1}{2}}\prod_{i=0}^m F_{i+1}^{2(m-i)}  }{\prod_{0\leq i,j\leq m} F_{n+i+j}}.
\end{align}

\section{Main Results} \label{section2}

\begin{definition}
Let $p,q, r, a, b$ and $c$ be any real numbers. Let $\mathbb{Z}_{\geq 0}=\{0,1,2,\ldots\}$. The sequence of polynomials in variable $x$, \[\mathcal{P}(x)(p,q,r; a,b,c):=(P_n)_{n\in\mathbb{Z}},\]is defined by
\begin{align*}
P_0 :=p, \quad P_1:=qx+r, \quad P_{n+2}:=(a x+b)P_{n+1} + c P_n, \text{ for }n\in\mathbb{Z}_{\geq 0}.
\end{align*}For $n<0$, $P_n$ is defined by 
\begin{align*}
 P_{n} := -\frac{ax+b}{c} P_{n+1} +\frac{1}{c} P_{n+2}.
\end{align*}The discriminant $\Delta_\mathcal{P}$ is defined by \[\Delta_{\mathcal{P}}:=(q^2 -apq)x^2 +(2qr -apr-bpq)x+(r^2 -bpr -cp^2).\]
\end{definition}

\begin{theorem}\label{theorem1}
Let $\mathcal{P}(x)(p_1,q_1,r_1;a,b,c)=(P_n)_{n\in\mathbb{Z}}$, $\mathcal{Q}(x)(p_2,q_2,r_2;a,b,c)=(Q_n)_{n\in\mathbb{Z}}$ and  $\mathcal{U}(x)(0,0,1;a,b,c)=(U_n)_{n\in\mathbb{Z}}$ be the sequences of polynomials defined by real numbers $p_1, p_2, q_1, q_2, r_1, r_2, a, b, c$ where $c\neq 0$. Then 
\begin{align*}
 P_{s+i} Q_{s+j} - P_{s} Q_{s+i+j} &= (-c)^s (P_1 Q_j - P_0 Q_{j+1}) U_i
\end{align*}for all integers $s, i , j$.
\end{theorem}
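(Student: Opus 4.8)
The plan is to fix the integers $s$ and $j$ and compare the two sides as sequences indexed by $i$. Since $P$, $Q$ and $U$ all obey the linear recurrence $X_{n+2}=(ax+b)X_{n+1}+cX_n$ for every $n\in\mathbb{Z}$ (the backward rule in the Definition being just this relation solved for the bottom term), the shifted sequences $i\mapsto P_{s+i}$ and $i\mapsto Q_{s+i+j}$ obey it as well. Hence the left-hand side, which is $Q_{s+j}\cdot P_{s+i}-P_s\cdot Q_{s+i+j}$ with coefficients independent of $i$, is again a solution of the recurrence in $i$; and the right-hand side is an $i$-independent scalar times $U_i$, so it too is a solution in $i$. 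Because $c\neq 0$ the recurrence runs in both directions, so a bi-infinite solution is pinned down by any two consecutive terms; it therefore suffices to verify the identity at $i=0$ and $i=1$.

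At $i=0$ both sides vanish, the left-hand side because it equals $P_sQ_{s+j}-P_sQ_{s+j}$ and the right-hand side because $U_0=0$. So the whole theorem reduces to the ``d'Ocagne-type'' relation
\begin{align*}
P_{s+1}Q_{s+j}-P_sQ_{s+j+1}=(-c)^s\,(P_1Q_j-P_0Q_{j+1}),
\end{align*}
for all integers $s$ and $j$, and this I would prove by induction on $s$ in both directions with $j$ left free. The case $s=0$ is trivial. For the inductive step I substitute $P_{s+2}=(ax+b)P_{s+1}+cP_s$ and $Q_{s+j+2}=(ax+b)Q_{s+j+1}+cQ_{s+j}$ into $P_{s+2}Q_{s+j+1}-P_{s+1}Q_{s+j+2}$; the two terms carrying the factor $ax+b$ cancel, leaving
\begin{align*}
P_{s+2}Q_{s+j+1}-P_{s+1}Q_{s+j+2}=-c\bigl(P_{s+1}Q_{s+j}-P_sQ_{s+j+1}\bigr),
\end{align*}
so the induction hypothesis completes the step $s\to s+1$; reading the same relation with $s$ replaced by $s-1$ and dividing by $c\neq 0$ gives the step toward negative $s$.

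Conceptually there is no real obstacle here — the content is entirely in choosing to reduce along the index $i$ first and then induct on $s$. The one place that needs genuine care is the repeated appeal to $c\neq 0$: it is what lets me treat the sequences as bi-infinite solutions determined by two consecutive values, and it is what makes the downward induction on $s$ legitimate. An alternative to the induction would be to invoke the representation $X_n=cX_0U_{n-1}+X_1U_n$ valid for any solution $X$, but that merely trades the induction above for one establishing that formula, so I would keep the direct argument.
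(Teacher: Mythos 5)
Your argument is correct and is essentially the paper's: both reduce the statement in $i$ to the cases $i=0$ and $i=1$ via the second-order recurrence (your ``two solutions agreeing at two consecutive indices coincide'' is just the paper's two-sided induction on $i$ repackaged), and both then extract the factor $(-c)^s$ for the $i=1$ case. The only presentational difference is that you obtain $(-c)^s$ by a two-sided induction on $s$ with the cancellation of the $(ax+b)$-terms at each step, whereas the paper gets it in one stroke by writing the relevant $2\times 2$ matrix as the $s$-th power of the companion matrix $\begin{pmatrix} ax+b & c\\ 1 & 0\end{pmatrix}$ times an initial matrix and taking determinants.
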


\begin{proof}
We prove it by induction on $i$. It is trivial for $i=0$. If $i=1$, we have 
\begin{align}
\label{equation5}  \begin{pmatrix}     
 P_{s+1}  &   Q_{s+j+1} \\
 P_{s}  &   Q_{s+j}
\end{pmatrix}&=  \begin{pmatrix}  
ax+b &  c\\
 1 & 0
\end{pmatrix} \begin{pmatrix}
 P_{s} & Q_{s+j} \\
 P_{s-1} & Q_{s+j-1} 
\end{pmatrix} =\begin{pmatrix}
ax+b & c \\
1 & 0
\end{pmatrix}^s  \begin{pmatrix}
P_1  &  Q_{j+1} \\
P_0 & Q_j 
\end{pmatrix}\text{ for }s\geq 0,   \\
\label{equation6}  \begin{pmatrix}     
 P_{s+1}  &   Q_{s+j+1} \\
 P_{s}  &   Q_{s+j}
\end{pmatrix}&=  \begin{pmatrix}  
ax+b & c \\
1 & 0 
\end{pmatrix}^{-1} \begin{pmatrix}
P_{s+2} & Q_{s+j+2} \\
P_{s+1} & Q_{s+j+1}
\end{pmatrix}= \begin{pmatrix}
ax+b & c \\
1 & 0
\end{pmatrix}^s \begin{pmatrix}
P_1  & Q_{j+1} \\
P_0 & Q_{j} 
\end{pmatrix}\text{ for }s<0.
\end{align}We take the determinants on both sides of (\ref{equation5}) and (\ref{equation6}) to get 
\begin{align}
\label{equation7}  P_{s+1} Q_{s+j} - P_{s} Q_{s+j+1} &= (-c)^s (P_1 Q_j - P_0 Q_{j+1}) =(-c)^s (P_1 Q_j - P_0 Q_{j+1}) U_1. 
\end{align}For $i>1$, we assume that the identity is true for $i-1$ and $i-2$. We have
\begin{align*}
P_{s+i} Q_{s+j} - P_{s} Q_{s+i+j} &= \begin{vmatrix}
 P_{s+i} & Q_{s+i+j} \\
 P_s & Q_{s+j}
\end{vmatrix} \\
 &= \begin{vmatrix}
(ax+b) P_{s+i-1} +c P_{s+i-2} & (ax+b) Q_{s+i+j-1} +c Q_{s+i+j-2} \\
  P_s & Q_{s+j}
\end{vmatrix} \\
 &= (ax+b) \begin{vmatrix}
 P_{s+i-1} & Q_{s+j+(i-1)} \\
 P_s & Q_{s+j} 
\end{vmatrix} + c \begin{vmatrix}
P_{s+i-2}  & Q_{s+j+(i-2)} \\
P_s  & Q_{s+j} 
\end{vmatrix} \\
&= (ax+b) (-c)^{s}(P_1 Q_j - P_0 Q_{j+1}) U_{i-1} +c (-c)^s (P_1 Q_j - P_0 Q_{j+1}) U_{i-2} \\
&= (-c)^s (P_1 Q_j - P_0 Q_{j+1} )((ax+b)U_{i-1} +c U_{i-2}) \\
&= (-c)^s (P_1 Q_j - P_0 Q_{j+1}) U_i.
\end{align*}For $i<0$, we assume that the identity is true for $i+1$ and $i+2$. We have
\begin{align*}
P_{s+i} Q_{s+j} - P_{s} Q_{s+i+j}  &= \begin{vmatrix}
 P_{s+i} & Q_{s+i+j} \\
 P_{s} & Q_{s+j} 
\end{vmatrix} \\
&= \begin{vmatrix}
 -\frac{ax+b}{c} P_{s+i+1}+\frac{1}{c} P_{s+i+2} & -\frac{ax+b}{c} Q_{s+i+j+1} +\frac{1}{c} Q_{s+i+j+2} \\
 P_s & Q_{s+j} 
\end{vmatrix}\\
&= -\frac{ax+b}{c}\begin{vmatrix}
P_{s+i+1} & Q_{s+i+j+1} \\
 P_s & Q_{s+j}
\end{vmatrix} +\frac{1}{c} \begin{vmatrix}
P_{s+i+2} & Q_{s+i+j+2} \\
P_s & Q_{s+j} 
\end{vmatrix} \\
&= -\frac{ax+b}{c} (-c)^s (P_1 Q_j - P_0 Q_{j+1} ) U_{i+1} +\frac{1}{c} (-c)^s (P_1 Q_j - P_0 Q_{j+1}) U_{i+2} \\
&=(-c)^s (P_1 Q_j - P_0 Q_{j+1}) \Big(-\frac{ax+b}{c}U_{i+1} +\frac{1}{c} U_{i+2}\Big) \\
&=(-c)^s (P_1 Q_j - P_0 Q_{j+1} ) U_i. 
\end{align*}
\end{proof}

\begin{remark}
We recover the {\it generalized Catalan Identity} by Melham and Shannon \cite{Shannon} (see also Tangboonduangjit and Thanatipanonda \cite[Proposition 1]{Aek1}) by substituting $x=1$ in Theorem \ref{theorem1}.
\end{remark}

\begin{corollary} \label{corollary1}
Let $\mathcal{P}(x)(p,q,r;a,b,c)=(P_n)_{n\in\mathbb{Z}}$ and $\mathcal{U}(x)(0,0,1;a,b,c)=(U_n)_{n\in\mathbb{Z}}$ be the sequences of polynomials defined by real numbers $p, q, r, a, b, c$ where $c\neq 0$. Then
\begin{align}
\label{equation7.1} P_{j} P_1 - P_0 P_{j+1} &= \Delta_{\mathcal{P}} \cdot U_j, \\
\label{equation7.2} P_{s+i} P_{s+j} - P_{s} P_{s+i+j} &= (-c)^s \Delta_\mathcal{P} \cdot U_i U_j
\end{align}where $\Delta_\mathcal{P}$ is the discriminant of $\mathcal{P}(x)(p,q,r;a,b,c)$.
\end{corollary}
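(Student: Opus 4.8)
The plan is to deduce both identities from Theorem~\ref{theorem1} together with a single auxiliary computation. First I would obtain \eqref{equation7.2} from \eqref{equation7.1}. Applying Theorem~\ref{theorem1} with $\mathcal{Q}=\mathcal{P}$ (that is, with $p_2=p_1=p$, $q_2=q_1=q$, $r_2=r_1=r$, and the same $a,b,c$) gives
\[
P_{s+i}P_{s+j}-P_sP_{s+i+j}=(-c)^s(P_1P_j-P_0P_{j+1})U_i,
\]
so once \eqref{equation7.1} is established the factor $P_1P_j-P_0P_{j+1}$ may be replaced by $\Delta_{\mathcal{P}}U_j$, which yields \eqref{equation7.2} immediately. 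Hence the whole corollary reduces to proving \eqref{equation7.1}.

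To prove \eqref{equation7.1} I would fix the sequence and regard $f(j):=P_1P_j-P_0P_{j+1}$ as a function of $j\in\mathbb{Z}$. Using $P_{j+2}=(ax+b)P_{j+1}+cP_j$ (and, for $j<0$, the backward recurrence from the definition) one checks directly that
\[
(ax+b)f(j+1)+cf(j)=P_1\big((ax+b)P_{j+1}+cP_j\big)-P_0\big((ax+b)P_{j+2}+cP_{j+1}\big)=f(j+2),
\]
so $f$ obeys the same linear recurrence as $\mathcal{U}$. Since $\Delta_{\mathcal{P}}$ is independent of $j$, the sequence $(\Delta_{\mathcal{P}}U_j)_{j\in\mathbb{Z}}$ obeys this recurrence as well, in both the forward and backward directions, so it suffices to match the two sequences at $j=0$ and $j=1$. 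At $j=0$ we have $f(0)=P_1P_0-P_0P_1=0=\Delta_{\mathcal{P}}U_0$. At $j=1$ we have $f(1)=P_1^2-P_0P_2=P_1^2-(ax+b)P_0P_1-cP_0^2$; substituting $P_0=p$ and $P_1=qx+r$ and expanding gives
\[
(q^2-apq)x^2+(2qr-apr-bpq)x+(r^2-bpr-cp^2)=\Delta_{\mathcal{P}}=\Delta_{\mathcal{P}}U_1,
\]
which closes the induction (forward for $j\ge 0$, backward for $j<0$) and proves \eqref{equation7.1}.

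The argument is essentially bookkeeping: the only genuine computation is the evaluation of $f(1)$, where the specific values $P_0=p$, $P_1=qx+r$ must be used to recognize the resulting quadratic in $x$ as the discriminant $\Delta_{\mathcal{P}}$ — this is the step I expect to carry whatever (mild) difficulty there is. The one subtlety worth flagging explicitly is the specialization: Theorem~\ref{theorem1} is stated for two a priori distinct sequences $\mathcal{P},\mathcal{Q}$ sharing the parameters $a,b,c$, so one should note that taking $\mathcal{Q}=\mathcal{P}$ is legitimate and that the sequence $(U_i)$ occurring there is precisely $\mathcal{U}(x)(0,0,1;a,b,c)$ as in the corollary.
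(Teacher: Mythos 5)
Your proposal is correct. The treatment of \eqref{equation7.2} coincides with the paper's: specialize Theorem~\ref{theorem1} to $\mathcal{Q}=\mathcal{P}$ and substitute \eqref{equation7.1} for the factor $P_1P_j-P_0P_{j+1}$. The only divergence is in how you reach \eqref{equation7.1}: you re-run the two-sided induction from scratch, verifying that $f(j)=P_1P_j-P_0P_{j+1}$ satisfies the recurrence of $\mathcal{U}$ and matching initial values, whereas the paper simply observes that \eqref{equation7.1} is itself an instance of Theorem~\ref{theorem1} (take $s=0$, $j=1$, $\mathcal{Q}=\mathcal{P}$, giving $P_iP_1-P_0P_{i+1}=(P_1^2-P_0P_2)U_i$) and then performs the same expansion of $P_1^2-P_0P_2$ into $\Delta_{\mathcal{P}}$ that you carry out. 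Your induction is sound --- it is in effect a specialization of the induction already inside the proof of Theorem~\ref{theorem1} --- so nothing is wrong, but the paper's route avoids duplicating that work; the genuinely load-bearing step in both versions is the same, namely recognizing $P_1^2-(ax+b)P_0P_1-cP_0^2$ as the discriminant after substituting $P_0=p$, $P_1=qx+r$, and your computation of that is correct.
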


\begin{proof}
By setting $s=0, j=1, p_1=p_2=p, q_1=q_2=q, r_1=r_2=r$ in Theorem \ref{theorem1}, we get 
\begin{align}
\label{equation8} P_i P_1 -P_0 P_{i+1} &= (P_1 P_1 - P_0 P_2)U_i.
\end{align}We note that $P_0=p$, $P_1=qx+r$ and $P_2=(ax+b)P_1+c P_0=(ax+b)(qx+r)+cp$. Hence, we obtain (\ref{equation7.1}) by simplifying (\ref{equation8}). On the other hand, by setting $p_1=p_2=p, q_1=q_2=q, r_1=r_2=r$ in Theorem \ref{theorem1}, we get 
\begin{align*}
 P_{s+i} P_{s+j} - P_{s} P_{s+i+j} &= (-c)^s (P_1 P_j - P_0 P_{j+1})U_i=(-c)^s \Delta_\mathcal{P} \cdot U_i U_j
\end{align*}in which the last equality is based on (\ref{equation7.1}).
\end{proof}

\begin{lemma} \label{lemma1}
Let $m\geq 1$. Let $B_i, D_i$ be polynomials in variable $x$, $A_i, C_i$ be rational functions in variable $x$, for $0\leq i\leq m$. Let $[(A_j B_i+ C_j D_i)^m]_{0\leq i,j\leq m}$ be the $(m+1)\times (m+1)$ matrix with entries $(A_j B_i+ C_j D_i)^m, 0\leq i,j\leq m$. Then we have the following determinant identity:
\begin{align}
\label{equation8.9}  \det([(A_j B_i+ C_j D_i)^m]_{0\leq i,j\leq m})&=\prod_{0\leq i<j\leq m}\Big((B_i D_j-B_j D_i)(A_i C_j - A_j C_i)\Big) \cdot \prod_{i=0}^m \binom{m}{i}.
\end{align}
\end{lemma}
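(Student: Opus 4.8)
The plan is to expand each entry $(A_j B_i + C_j D_i)^m$ by the binomial theorem and recognize the resulting determinant as a product of two factored determinants, times the product of binomial coefficients. Concretely, write
\begin{align*}
(A_j B_i + C_j D_i)^m &= \sum_{k=0}^m \binom{m}{k} (A_j B_i)^{m-k} (C_j D_i)^k = \sum_{k=0}^m \binom{m}{k} B_i^{m-k} D_i^k \cdot A_j^{m-k} C_j^k.
\end{align*}
This exhibits the matrix $M := [(A_j B_i + C_j D_i)^m]_{0\le i,j\le m}$ as a product $M = X \, \mathrm{diag}\!\big(\binom{m}{0},\dots,\binom{m}{m}\big) \, Y^{T}$, where $X$ is the $(m+1)\times(m+1)$ matrix with $(i,k)$-entry $B_i^{m-k} D_i^{k}$ and $Y$ is the matrix with $(j,k)$-entry $A_j^{m-k} C_j^{k}$, for $0\le i,j,k\le m$. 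Hence $\det M = \det X \cdot \det Y \cdot \prod_{i=0}^m \binom{m}{i}$.

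The next step is to evaluate $\det X$ and $\det Y$. Each is a "generalized Vandermonde" of the form $\det\big[ B_i^{m-k} D_i^{k}\big]_{0\le i,k\le m}$. Assuming the $D_i$ are (generically) nonzero, factor $D_i^{m}$ out of row $i$, so that the $(i,k)$-entry becomes $(B_i/D_i)^{m-k}$; reversing the order of the columns turns this into the standard Vandermonde matrix in the variables $t_i := B_i/D_i$. Therefore
\begin{align*}
\det X &= \Big(\prod_{i=0}^m D_i^{m}\Big) \cdot (-1)^{\binom{m+1}{2}} \prod_{0\le i<j\le m}\!\big(t_j - t_i\big) = \prod_{0\le i<j\le m}\!\big(B_i D_j - B_j D_i\big),
\end{align*}
where the sign from reversing the columns, $(-1)^{\binom{m+1}{2}}$, is absorbed by rewriting $D_i D_j (t_j - t_i) = B_j D_i - B_i D_j$ and noting that replacing each factor $t_j - t_i$ by $B_i D_j - B_j D_i$ flips the sign exactly $\binom{m+1}{2}$ times; the two sign contributions cancel. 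Likewise $\det Y = \prod_{0\le i<j\le m}(A_i C_j - A_j C_i)$. Multiplying the three factors gives precisely (\ref{equation8.9}).

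The only real obstacle is bookkeeping: getting the sign from the column reversal to match the antisymmetrization of the Vandermonde differences exactly, and handling the degenerate cases where some $D_i$ or $C_j$ vanish. The sign issue is the one to be careful about — it is cleanest to observe that $\prod_{i<j}(B_iD_j - B_jD_i)$ is, up to the overall sign $(-1)^{\binom{m+1}{2}}$, equal to $\prod_i D_i^m \cdot \prod_{i<j}(t_j - t_i)$, and that the same sign appears in both $\det X$ and in the identity's right-hand side factor, so one may simply verify the formula by comparing the two antisymmetric polynomials of the same multidegree. For the degeneracy, since both sides of (\ref{equation8.9}) are rational functions (indeed polynomials, after clearing) in the entries $A_i, B_i, C_i, D_i$, it suffices to prove the identity on the Zariski-dense open set where all $D_i$ and $C_i$ are nonzero and then invoke continuity/polynomial identity to extend it; alternatively one avoids division entirely by expanding $\det X$ directly via multilinearity in the rows and matching monomials. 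I would present the clean "matrix factorization + Vandermonde" argument and relegate the sign check to a one-line remark.
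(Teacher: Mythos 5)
Your proof is correct, but it takes a genuinely different route from the paper. The paper quotes Krattenthaler's evaluation of $\det([(c_j d_i+1)^m])$ as a black box, substitutes $c_j=A_j/C_j$, $d_i=B_i/D_i$, clears denominators, and then runs a two-stage continuity argument (first to cover the zeros of $C_j, D_i$, then a second pass to upgrade $A_i, C_i$ from polynomials to rational functions). You instead prove the identity from scratch: the binomial expansion exhibits the matrix as $X\,\mathrm{diag}\big(\binom{m}{0},\dots,\binom{m}{m}\big)\,Y^{T}$, and the two outer factors are bivariate Vandermonde determinants. This is essentially the standard proof of Krattenthaler's lemma itself, homogenized, and it buys you two things: the argument works verbatim in the field $\mathbb{R}(x)$, so rational $A_i, C_i$ need no separate treatment; and, in its division-free form, it needs no continuity or genericity argument at all, since $\det\big[x_i^{m-k}y_i^{k}\big]_{0\le i,k\le m}=\prod_{0\le i<j\le m}(x_iy_j-x_jy_i)$ is a polynomial identity (both sides vanish when rows $i,j$ are proportional, degrees match, and the diagonal monomial fixes the constant to $1$). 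I would in fact urge you to promote that division-free evaluation from a remark to the main line of the proof: as written, your derivation of $\det X$ divides by $D_i$, and ``generically nonzero'' is vacuous if some $D_i$ is identically the zero polynomial, a degenerate case the lemma still covers (the paper's own continuity argument has the same blind spot). Your sign bookkeeping is right --- the $(-1)^{\binom{m+1}{2}}$ from reversing the columns cancels against the $(-1)^{\binom{m+1}{2}}$ from rewriting $D_iD_j(t_j-t_i)=B_jD_i-B_iD_j$ over the $\binom{m+1}{2}$ pairs --- but the homogeneous identity makes the whole issue disappear.
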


\begin{proof}
We invoke the following result by Krattenthaler \cite[Lemma 10]{Krattenthaler} (see also Tangboonduangjit and Thanatipanonda \cite[Lemma 3]{Aek1}:
\begin{align}
\label{equation9} \det([(c_j d_i+1)^m]_{0\leq i,j\leq m}) &= \prod_{0\leq i<j\leq m}\Big( (d_i -d_j)(c_i-c_j)\Big)\cdot\prod_{i=0}^m\binom{m}{i}
\end{align}where $c_j, d_i$ are real numbers for $0\leq i,j\leq m$. First, we prove the lemma for polynomials $A_i, B_i, C_i, D_i$ for all $0\leq i\leq m$. For the values of $x$ such that $C_j\neq 0$ and $D_i\neq 0$ for $0\leq i,j\leq m$, let \[c_j=\frac{A_j}{C_j}, \quad d_i=\frac{B_i}{D_i},\text{ for }0\leq i,j\leq m.\]We note that
\begin{align}
\nonumber  \det([(c_j d_i+1)^m]_{0\leq i,j\leq m}) &=\det\Big(\Big[\Big(\frac{A_jB_i}{C_j D_i}+1\Big)^m\Big]_{0\leq i,j\leq m}\Big)=\det\Big(\Big[\Big(\frac{A_j B_i+C_j D_i}{C_j D_i}\Big)^m \Big]_{0\leq i,j\leq m}\Big)\\
\label{equation9.1}  &= \Big(\prod_{0\leq i\leq m}\frac{1}{(C_i D_i)^m} \Big)\cdot \det([(A_j B_i+ C_j D_i)^m]_{0\leq i,j\leq m}).
\end{align}Also, we have 
\begin{align}
\nonumber \prod_{0\leq i<j\leq m} \Big( (d_i -d_j)(c_i-c_j) \Big)&=\prod_{0\leq i<j\leq m} \Big(\Big( \frac{B_i D_j-B_j D_i}{D_i D_j}\Big)\cdot\Big(\frac{A_i C_j - A_j C_i}{C_i C_j}  \Big)  \Big)  \\
\label{equation9.2} &=\prod_{0\leq i\leq m}\Big(\frac{1}{(C_i D_i)^m} \Big)     \cdot\prod_{0\leq i<j\leq m}\Big((B_i D_j-B_j D_i)(A_i C_j - A_j C_i)\Big).
\end{align}By (\ref{equation9}), (\ref{equation9.1}), (\ref{equation9.2}), we get (\ref{equation8.9}) as desired.

Based on the facts that there are only a finite number of roots for $C_j, D_i$ where $0\leq i,j\leq m$ and the determinant of a matrix  with polynomial entries is a continuous function in $x$, the equality (\ref{equation8.9}) still holds true for the values of $x$ such that $C_j=0$ or $D_i=0$ for some $i$ or $j$.

Next, we assume that $A_i$ and $C_i$ are rational functions for all $0\leq i\leq m$. We write $A_i$ and $C_i$ as follows: 
\[ A_i=\frac{E_i}{F_i}, \quad C_i =\frac{G_i}{H_i} \text{ for }0\leq i\leq m,\]where $E_i, F_i, G_i, H_i$ are all polynomials for $0\leq i\leq m$. For the values of $x$ such that $F_i\neq 0$ and $H_i\neq 0$ for all $0\leq i\leq m$, we get 
\begin{equation}
\begin{aligned}
\nonumber\det([(A_j B_i + C_j D_i)^m]_{0\leq i,j\leq m}) ={}&\det\Big(\Big[ \Big(\frac{H_jE_j B_i+D_i F_j G_j}{F_j H_j} \Big)^m   \Big]_{0\leq i,j\leq m}\Big) \\
 \nonumber={}& \Big(\prod_{0\leq i\leq m}\frac{1}{(F_i H_i)^m} \Big) \cdot \det([((H_j E_j)B_i +(G_j F_j) D_i)^m]_{0\leq i,j\leq m}   )\\
 \nonumber={}&  \Big(\prod_{0\leq i\leq m}\frac{1}{(F_i H_i)^m} \Big) \Big(\prod_{i=0}^m \binom{m}{i}\Big) \Big( \prod_{0\leq i<j\leq m} (B_i D_j-B_j D_i)\cdot\\  &(H_i E_i G_j F_j - H_j E_j G_i F_i) \Big)\\
 \nonumber ={}& \Big(\prod_{i=0}^m \binom{m}{i}\Big) \Big(\prod_{0\leq i<j\leq m} (B_i D_j-B_j D_i) \big( \frac{E_i}{F_i}\cdot\frac{G_j}{H_j} -\frac{E_j}{F_j}\cdot \frac{G_i}{H_i} \big) \Big) \\
\nonumber ={}& \Big(\prod_{i=0}^m \binom{m}{i}\Big) \Big(\prod_{0\leq i<j\leq m} (B_i D_j-B_j D_i) \big( A_i C_j-A_j C_i \big) \Big). 
\end{aligned}
\end{equation}For the values of $x$ such that $F_i=0$ or $H_i=0$ for some $i$, the equality still holds true as the determinant of a matrix with polynomial entries is a continuous function. 
\end{proof}

\begin{theorem}\label{theorem2}
Let $s, k, n$ be any integers, $m\geq 1$. Let $\mathcal{P}(x)(p,q,r;a,b,c)=(P_n)_{n\in\mathbb{Z}}$ and $\mathcal{U}(x)(0,0,1;a,b,c)=(U_n)_{n\in\mathbb{Z}}$ be the sequences of polynomials defined by real numbers $p, q, r, a, b, c$ where $c\neq 0$.  The determinant of the matrix $[P_{s+k(n+i+j)}^m]_{0\leq i,j\leq m}$ is given by \[\det([P_{s+k(n+i+j)}^m]_{0\leq i,j\leq m})=(-1)^{(s+kn+1)\binom{m+1}{2}} \Delta_\mathcal{P}^{\binom{m+1}{2}}\cdot c^{(s+kn)\binom{m+1}{2}+2k\binom{m+1}{3}}\cdot\prod_{i=0}^m \Big(\binom{m}{i} U_{k(i+1)}^{2(m-i)} \Big)\]where $\Delta_\mathcal{P}$ is the discriminant of $\mathcal{P}(x)(p,q,r;a,b,c)$.
\end{theorem}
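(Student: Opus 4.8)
The plan is to reduce the statement to Lemma~\ref{lemma1} by writing each entry $P_{s+k(n+i+j)}$ as a bilinear expression $A_j B_i + C_j D_i$ in quantities depending on $i$ and on $j$ separately. The tool for this is the addition formula
\[ P_{m+\ell} = U_{\ell}\, P_{m+1} + c\, U_{\ell-1}\, P_m, \qquad m,\ell\in\mathbb{Z}, \]
which I would obtain from the transfer matrix $T=\begin{pmatrix} ax+b & c\\ 1 & 0\end{pmatrix}$ already used in the proof of Theorem~\ref{theorem1}: a short induction gives $T^{\ell}=\begin{pmatrix} U_{\ell+1} & cU_{\ell}\\ U_{\ell} & cU_{\ell-1}\end{pmatrix}$ for every $\ell\in\mathbb{Z}$ (invertibility of $T$, i.e.\ $c\neq 0$, handles the negative $\ell$), and reading off the second row of $T^{m+\ell}\binom{P_1}{P_0}=T^{\ell}\,T^{m}\binom{P_1}{P_0}=T^{\ell}\binom{P_{m+1}}{P_m}$ gives the formula. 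Putting $M:=s+kn$ and splitting $M+k(i+j)=(M+ki)+kj$, this yields $P_{s+k(n+i+j)}=A_jB_i+C_jD_i$ with $B_i=P_{M+ki+1}$, $D_i=P_{M+ki}$, $A_j=U_{kj}$, $C_j=cU_{kj-1}$. Each of these is a polynomial in $x$ (every term of $\mathcal{P}$ and of $\mathcal{U}$, at any integer index, is a polynomial in $x$ since $c$ is a fixed nonzero real), so the polynomial case of Lemma~\ref{lemma1} applies.

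Next I would evaluate the two factors appearing in Lemma~\ref{lemma1}. For $B_iD_j-B_jD_i=P_{M+ki+1}P_{M+kj}-P_{M+kj+1}P_{M+ki}$ I use the identity $P_{m+1}P_n-P_mP_{n+1}=(-c)^m\Delta_{\mathcal{P}}\,U_{n-m}$, valid for all integers $m,n$, which follows by setting $i=1$ in (\ref{equation7.2}) (where $U_1=1$) and then substituting $s=m$, $j=n-m$; this gives $B_iD_j-B_jD_i=(-c)^{M+ki}\Delta_{\mathcal{P}}\,U_{k(j-i)}$. Applying the same identity with $\mathcal{U}$ in place of $\mathcal{P}$, where $\Delta_{\mathcal{U}}=1$, gives $U_{ki}U_{kj-1}-U_{ki-1}U_{kj}=(-c)^{ki-1}U_{k(j-i)}$, hence $A_iC_j-A_jC_i=c\,(U_{ki}U_{kj-1}-U_{kj}U_{ki-1})=c\,(-c)^{ki-1}U_{k(j-i)}=-(-c)^{ki}U_{k(j-i)}$. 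Multiplying the two, $(B_iD_j-B_jD_i)(A_iC_j-A_jC_i)=-(-c)^{M+2ki}\Delta_{\mathcal{P}}\,U_{k(j-i)}^2$.

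Finally I would form the product over $0\le i<j\le m$ and tidy up. There are $\binom{m+1}{2}$ such pairs, $\sum_{0\le i<j\le m}i=\binom{m+1}{3}$, and grouping the pairs by the difference $d=j-i$ gives $\prod_{0\le i<j\le m}U_{k(j-i)}^2=\prod_{d=1}^{m}U_{kd}^{2(m+1-d)}=\prod_{i=0}^{m}U_{k(i+1)}^{2(m-i)}$. Hence $\prod_{0\le i<j\le m}\big((B_iD_j-B_jD_i)(A_iC_j-A_jC_i)\big)=(-1)^{\binom{m+1}{2}}(-c)^{M\binom{m+1}{2}+2k\binom{m+1}{3}}\Delta_{\mathcal{P}}^{\binom{m+1}{2}}\prod_{i=0}^{m}U_{k(i+1)}^{2(m-i)}$. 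Writing $(-c)^{t}=(-1)^{t}c^{t}$, using $(-1)^{2k\binom{m+1}{3}}=1$, and absorbing the leftover sign into $(-1)^{\binom{m+1}{2}}$ turns the exponent of $-1$ into $(M+1)\binom{m+1}{2}=(s+kn+1)\binom{m+1}{2}$; multiplying through by the factor $\prod_{i=0}^{m}\binom{m}{i}$ supplied by Lemma~\ref{lemma1} produces exactly the asserted formula. I do not anticipate a genuine obstacle: the only real idea is the bilinear splitting of $P_{s+k(n+i+j)}$ via the addition formula, and everything afterward is bookkeeping — the point that needs care is tracking the powers of $-c$, which is why I keep the factor $U_{n-m}$ with a possibly negative index instead of reducing negative indices by hand.
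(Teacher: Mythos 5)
Your argument is correct, and while it funnels into the same key lemma (Lemma~\ref{lemma1}) as the paper, it gets there by a genuinely different decomposition. The paper writes
\[
P_{s+k(n+i+j)}=\frac{P_{k(j-m)}}{P_{-km}}\,P_{s+k(n+i)}+\frac{-(-c)^{-km}\Delta_{\mathcal P}\,U_{kj}}{P_{-km}}\,U_{s+k(n+m+i)},
\]
obtained from (\ref{equation7.2}) by a substitution, so the $j$-coefficients are rational functions with denominator $P_{-km}$; it then needs the rational-function case of Lemma~\ref{lemma1} and two further applications of Theorem~\ref{theorem1} (identities (\ref{equation11}) and (\ref{equation12})) to evaluate the cross factors. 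You instead use the transfer-matrix addition formula $P_{m+\ell}=U_{\ell}P_{m+1}+cU_{\ell-1}P_{m}$, which gives a purely polynomial splitting $A_j=U_{kj}$, $C_j=cU_{kj-1}$, $B_i=P_{M+ki+1}$, $D_i=P_{M+ki}$; the two cross factors then both come from the single Catalan-type identity $P_{m+1}P_n-P_mP_{n+1}=(-c)^m\Delta_{\mathcal P}U_{n-m}$, applied once to $\mathcal P$ and once to $\mathcal U$ (where $\Delta_{\mathcal U}=1$). Your computations check out: the matrix form of $T^{\ell}$, the evaluation $-(-c)^{M+2ki}\Delta_{\mathcal P}U_{k(j-i)}^2$ for each pair, and the exponent counts $\binom{m+1}{2}$ and $\binom{m+1}{3}$ all lead to the stated formula. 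What your route buys is that it needs only the polynomial case of Lemma~\ref{lemma1} and, more substantively, it avoids dividing by $P_{-km}$ — a quantity that can be the zero polynomial (e.g.\ the Fibonacci-type case $p=0$ with $k=0$), a degenerate situation the paper's decomposition (\ref{equation10}) silently passes over; what the paper's route buys is economy, reusing Theorem~\ref{theorem1} and Corollary~\ref{corollary1} as the only identities and staying parallel to the Tangboonduangjit--Thanatipanonda template it generalizes.
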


\begin{proof}
By substituting $s=-km, i=kj', j=s'+k(n+m+i')$ into (\ref{equation7.2}) in Corollary \ref{corollary1} and then replacing $s',i',j'$ by $s,i,j$ respectively, we get
\begin{align}
\nonumber P_{k(j-m)} P_{s+k(n+i)} - P_{-km} P_{s+k(n+i+j)}&=(-c)^{-km} \Delta_{\mathcal{P}}\cdot U_{kj} U_{s+k(n+m+i)},\\
\label{equation10}  P_{s+k(n+i+j)}&= \frac{P_{k(j-m)}}{P_{-km}}\cdot P_{s+k(n+i)}+\frac{-(-c)^{-km}\Delta_\mathcal{P}\cdot U_{kj}}{P_{-km}} \cdot U_{s+k(n+m+i)}. 
\end{align}By substituting $s=s'+k(n+m+i'), i=k(j'-i'), j=-km$ into Theorem \ref{theorem1} and then replacing $s',i',j'$ by $s,i,j$ respectively, we get 
\begin{align}
\nonumber P_{s+k(n+i)} U_{s+k(n+m+j)} -P_{s+k(n+j)} U_{s+k(n+m+i)} &=(-c)^{s+k(n+m+i)}(U_1 P_{-km} -U_0 P_{-km+1})U_{k(j-i)}\\
\label{equation11} &= (-c)^{s+k(n+m+i)} P_{-km} U_{k(j-i)}.
\end{align}By substituting $s=ki', i=k(j'-i'), j=-km$ into Theorem \ref{theorem1} and then replacing $i',j'$ by $i,j$ respectively, we get \begin{align}
\label{equation12} P_{k(i-m)} U_{kj} -P_{k(j-m)} U_{ki} &= (-c)^{ki} P_{-km} U_{k(j-i)}. 
\end{align}By (\ref{equation10}), we get
\begin{align}
\label{equation13} \det([P_{s+k(n+i+j)}^m]_{0\leq i,j\leq m})&=\det\Big(\Big[\Big(\frac{P_{k(j-m)}}{P_{-km}}\cdot P_{s+k(n+i)}+\frac{-(-c)^{-km}\Delta_\mathcal{P} \cdot U_{kj}}{P_{-km}} \cdot U_{s+k(n+m+i)}\Big)^m\Big]_{0\leq i,j\leq m}\Big).
\end{align}By (\ref{equation8.9}), the term in (\ref{equation13}) becomes
\begin{align}
\label{equation14} \prod_{i=0}^m \binom{m}{i}\cdot \prod_{0\leq i<j\leq m} \Big((P_{s+k(n+i)} U_{s+k(n+m+j)}-P_{s+k(n+j)} U_{s+k(n+m+i)})\Big( \frac{-(-c)^{-km}\Delta_{\mathcal{P}}}{P_{-km}^2} \big(P_{k(i-m)} U_{kj}-P_{k(j-m)} U_{ki}    \big)\Big)   \Big).
\end{align}By (\ref{equation11}), (\ref{equation12}), the term in (\ref{equation14}) becomes 
\begin{align}
\prod_{i=0}^m \binom{m}{i}\cdot   \prod_{0\leq i<j\leq m}\Big( (-1)^{s+kn+1} c^{s+k(n+2i)} \Delta_\mathcal{P} \cdot U^2_{k(j-i)}\Big).
\end{align}As a consequence, we get the desired result by standard counting arguments. 
\end{proof}

\begin{remark}
We recover Theorem 5 in the work of Tangboonduangjit and Thanatipanonda \cite{Aek1} by substituting $x=1$ in Theorem \ref{theorem2}.
\end{remark}

Next, we look at other determinant identities. 

\begin{lemma} \label{lemma2}
Let $m\geq 1$. Let $B_i, D_i$ be polynomials in variable $x$ for $0\leq i\leq m$. Let $A_j, C_j$ be rational functions in variable $x$ for $i\in \mathbb{Z}$. Let $(d_i)_{1\leq i\leq r}$ and $(e_i)_{1\leq i\leq r}$ be sequences of integers. Then \[\det\Big(\Big[ \prod_{f=j+1}^m \big( A_{d_f} B_i+C_{d_f} D_i \big) \cdot \prod_{g=1}^j \big( A_{e_g} B_i +C_{e_g} D_i  \big)   \Big]_{0\leq i,j\leq m}   \Big)= \prod_{0\leq i<j\leq m} \big(B_i D_j - B_j D_i   \big)\cdot \prod_{1\leq i\leq j\leq m}\big(C_{e_i} A_{d_j} -A_{e_i} C_{d_j}    \big).   \]
\end{lemma}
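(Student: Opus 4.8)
The plan is to read the claimed identity as a polynomial identity in finitely many indeterminates, prove it there, and then specialize; the two factors on the right-hand side come out of a factorization of the matrix into a Vandermonde-type matrix times a coefficient matrix.

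\emph{Reduction and factorization.} Only the quantities $A_{d_f},C_{d_f}$ $(1\le f\le m)$, $A_{e_g},C_{e_g}$ $(1\le g\le m)$ and $B_0,\dots,B_m,D_0,\dots,D_m$ occur in the matrix, so it suffices to prove the identity when all of these are algebraically independent indeterminates over $\mathbb{Q}$; the stated version (allowing repetitions among the indices, rational-function entries $A_j,C_j$, and polynomial entries $B_i,D_i$) then follows by applying a suitable ring homomorphism. Set $\phi_j(B,D):=\prod_{f=j+1}^m(A_{d_f}B+C_{d_f}D)\cdot\prod_{g=1}^j(A_{e_g}B+C_{e_g}D)$, a binary form of degree $m$, and write $\phi_j(B,D)=\sum_{k=0}^m c_{j,k}\,B^{m-k}D^k$. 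Then the $(m+1)\times(m+1)$ matrix $M$ in the Lemma satisfies $M=V\,C^{\top}$, where $V:=[\,B_i^{m-k}D_i^k\,]_{0\le i,k\le m}$ and $C:=[\,c_{j,k}\,]_{0\le j,k\le m}$, so $\det M=\det V\cdot\det C$. A standard Vandermonde computation (pull $D_i^m$ out of row $i$ of $V$, reverse the columns, recombine the powers of the $D_i$) gives $\det V=\prod_{0\le i<j\le m}(B_iD_j-B_jD_i)$, exactly the first factor on the right-hand side, and it remains to prove $\det C=\prod_{1\le i\le j\le m}\Lambda_{i,j}$, where $\Lambda_{i,j}:=C_{e_i}A_{d_j}-A_{e_i}C_{d_j}$, an identity in the $A$'s and $C$'s alone.

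\emph{Identifying $\det C$.} This I would do in four steps. (i) For $1\le i\le j\le m$, $\Lambda_{i,j}$ divides $\det C$. Reduce modulo $\Lambda_{i,j}$ (a prime) and let $K$ be the fraction field of the resulting domain. Over $K$ the linear form $\ell:=A_{d_j}B+C_{d_j}D$ divides \emph{every} $\phi_{j'}$, $0\le j'\le m$: if $j'\le j-1$ then $A_{d_j}B+C_{d_j}D=\ell$ is literally a factor of $\phi_{j'}$; if $j'\ge j$ (so $j'\ge j\ge i\ge 1$) then the factor $A_{e_i}B+C_{e_i}D$ of $\phi_{j'}$ equals $(A_{e_i}/A_{d_j})\ell$, since $C_{e_i}A_{d_j}=A_{e_i}C_{d_j}$ over $K$. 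Writing $\phi_{j'}=\ell\,\widetilde\phi_{j'}$ with $\widetilde\phi_{j'}$ a binary form of degree $m-1$, the $m+1$ forms $\widetilde\phi_0,\dots,\widetilde\phi_m$ lie in the $m$-dimensional $K$-space of binary forms of degree $m-1$, hence are $K$-linearly dependent; applying that dependence to the columns of $\mathrm{diag}(\ell(B_0,D_0),\dots,\ell(B_m,D_m))^{-1}M$ gives $\det M\equiv 0\pmod{\Lambda_{i,j}}$, and since $\det V\neq 0$ in $K$ we obtain $\Lambda_{i,j}\mid\det C$. (ii) The $\Lambda_{i,j}$, $1\le i\le j\le m$, are pairwise non-associate irreducibles (each is of the form $pq-rs$ in four distinct indeterminates, and distinct pairs $(i,j)$ use distinct variable sets $\{A_{d_j},C_{d_j},A_{e_i},C_{e_i}\}$), so $\prod_{1\le i\le j\le m}\Lambda_{i,j}$ divides $\det C$. (iii) Each $c_{j,k}$ is homogeneous of degree $m$ in the $A$'s and $C$'s, so $\det C$ is homogeneous of degree $m(m+1)=2\binom{m+1}{2}$, the common degree of $\prod_{1\le i\le j\le m}\Lambda_{i,j}$; hence $\det C=c\cdot\prod_{1\le i\le j\le m}\Lambda_{i,j}$ for a scalar $c$. (iv) Finally $c=1$: compare the coefficient of the monomial $\prod_{j=1}^m A_{d_j}^{j}\cdot\prod_{i=1}^m C_{e_i}^{m+1-i}$. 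In $\phi_j$ the only part free of every $A_{e_g}$ and every $C_{d_f}$ is $\big(\prod_{f>j}A_{d_f}\big)\big(\prod_{g\le j}C_{e_g}\big)B^{m-j}D^j$, so in $\det C=\sum_\sigma\mathrm{sgn}(\sigma)\prod_j c_{j,\sigma(j)}$ only the identity permutation contributes to this monomial, with coefficient $+1$, while its coefficient in $c\cdot\prod_{1\le i\le j\le m}\Lambda_{i,j}$ is $c$.

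The step I expect to be the main obstacle is (i): one must notice that the \emph{single} linear form $\ell=A_{d_j}B+C_{d_j}D$ divides \emph{all} $m+1$ column forms $\phi_{j'}$ (the ones with $j'\ge j$ only after imposing $\Lambda_{i,j}=0$, through $C_{e_i}A_{d_j}=A_{e_i}C_{d_j}$), and then carry the ``reduce modulo a prime, pass to the fraction field, count dimensions of binary forms'' argument through cleanly. The Vandermonde sign and index bookkeeping, the irreducibility remarks, and the monomial extraction in (iv) are routine but need care.
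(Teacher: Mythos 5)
Your proof is correct, but it takes a genuinely different route from the paper. The paper's proof is short: it quotes the ``factorization method'' identity of Krattenthaler,
\[
\det \Big(  \Big[ \prod_{f=j+1}^m ( X_i +F_f ) \cdot \prod_{g=1}^j ( X_i+G_g )   \Big]_{0\leq i,j\leq m} \Big)= \prod_{0\leq i<j\leq m} (X_j - X_i ) \cdot \prod_{1\leq i\leq j\leq m}  ( F_j - G_i  ),
\]
substitutes $X_i=B_i/D_i$, $F_j=C_{d_j}/A_{d_j}$, $G_j=C_{e_j}/A_{e_j}$, clears denominators, and disposes of the values of $x$ at which some $D_i$ or $A_{d_j}$, $A_{e_j}$ vanishes by a continuity argument. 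You instead prove the identity from scratch: after passing to independent indeterminates you factor $M=VC^{\top}$ with $V=[B_i^{m-k}D_i^k]$ (a homogenized Vandermonde matrix, giving the first product) and identify $\det C$ with $\prod_{i\le j}\Lambda_{i,j}$ by the standard ``identification of factors'' scheme: each irreducible $\Lambda_{i,j}$ divides $\det C$ because modulo $\Lambda_{i,j}$ the single linear form $A_{d_j}B+C_{d_j}D$ divides all $m+1$ column forms, forcing a column dependence; the factors are pairwise non-associate; the degrees match; and the constant is pinned down by one monomial. Each step checks out (in particular the two signs $(-1)^{\binom{m+1}{2}}$ in the Vandermonde computation cancel, and the monomial $\prod_j A_{d_j}^{j}\prod_i C_{e_i}^{m+1-i}$ is indeed reached only through the identity permutation). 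What your approach buys is self-containedness --- you effectively reprove the quoted Krattenthaler identity in its homogeneous form --- and it sidesteps the paper's slightly delicate clearing-of-denominators and continuity step entirely, since everything is a polynomial identity in indeterminates from the outset. What it costs is length: given the citation, the paper's argument is a few lines. Your own divisibility argument is itself an instance of Krattenthaler's general method, so the two proofs are cousins, but yours does not rely on the specific identity being already known.
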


\begin{proof}
By the factorization method of Krattenthaler \cite[Section 4]{Krattenthaler}, it is plain to get the following identity: \begin{align} \label{equation21} \det \Big(  \Big[ \prod_{f=j+1}^m \big( X_i +F_f \big) \cdot \prod_{g=1}^j \big( X_i+G_g \big)   \Big]_{0\leq i,j\leq m} \Big)&= \prod_{0\leq i<j\leq m} \big(X_j - X_i \big) \cdot \prod_{1\leq i\leq j\leq m}  \big( F_j - G_i  \big)\end{align}where $X_i$ for $0\leq i\leq m$, $D_j, E_j$ for $1\leq j\leq m$ are some indeterminates. For the values of $x$ such that $D_i\neq 0$ and $A_j\neq 0$ for $0\leq i\leq m$ and $j\in\mathbb{Z}$, let \[ X_i=\frac{B_i}{D_i}, \quad F_j=\frac{C_{d_j}}{A_{d_j}}, \quad  G_j=\frac{C_{e_j}}{A_{e_j}} \]for $0\leq i\leq m$ and $1\leq j\leq m$. By similar reasoning as in the proof of Lemma \ref{lemma1}, we get the desired result by clearing the denominators on both sides of (\ref{equation21}). For the values of $x$ which are the roots of $D_i$ or $A_j$ for some $i$ or $j$, the equality still holds true based on the fact that the determinant of a matrix with polynomial entries is a continuous function. 
\end{proof}

\begin{theorem} \label{theorem3}
Let $s, k, n$ be any integers, $m\geq 1$. Let $\mathcal{P}(x)(p,q,r;a,b,c)=(P_n)_{n\in\mathbb{Z}}$ and $\mathcal{U}(x)(0,0,1;a,b,c)=(U_n)_{n\in\mathbb{Z}}$ be the sequences of polynomials defined by real numbers $p, q, r, a, b, c$ where $c\neq 0$. Let $(d_i)_{1\leq i\leq m}$ and $(e_i)_{1\leq i\leq m}$ be sequences of integers. Then
\begin{equation}
\begin{aligned}
 \det \Big( \Big[\prod_{f=j+1}^m P_{s+k(n+i+d_f)} \prod_{g=1}^j P_{s+k(n+i+e_g)}\Big]_{0\leq i,j\leq m}   \Big) ={} & (-\Delta_\mathcal{P})^{\binom{m+1}{2}} (-c)^{(s+kn)\binom{m+1}{2} +k\binom{m+1}{3}} \prod_{l=1}^m U_{kl}^{m+1-l}\cdot\\ \nonumber  &\prod_{1\leq i\leq j\leq m} (-c)^{kd_j} U_{k(e_i-d_j)}
\end{aligned}
\end{equation}where $\Delta_\mathcal{P}$ is the discriminant of $\mathcal{P}(x)(p,q,r;a,b,c)$.
\end{theorem}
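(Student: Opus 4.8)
The plan is to mirror the proof of Theorem \ref{theorem2}, replacing the single-power Lemma \ref{lemma1} with the product Lemma \ref{lemma2}. First I would rewrite each entry $P_{s+k(n+i+j)}$ in the ``separated'' form obtained in equation (\ref{equation10}) of the previous proof: substituting $s=-km$, $i=kj'$, $j=s'+k(n+m+i')$ into (\ref{equation7.2}) gives
\begin{align*}
P_{s+k(n+i+j)} = \frac{P_{k(j-m)}}{P_{-km}} P_{s+k(n+i)} + \frac{-(-c)^{-km}\Delta_{\mathcal{P}} U_{kj}}{P_{-km}} U_{s+k(n+m+i)}.
\end{align*}
Thus, for a fixed shift $t$, the quantity $P_{s+k(n+i+t)}$ has the shape $A_t B_i + C_t D_i$ with $B_i := P_{s+k(n+i)}$, $D_i := U_{s+k(n+m+i)}$ depending only on the row index $i$, and $A_t := P_{k(t-m)}/P_{-km}$, $C_t := -(-c)^{-km}\Delta_{\mathcal{P}} U_{kt}/P_{-km}$ rational functions of $x$ indexed by $t$. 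Then the $(i,j)$ entry of our matrix, $\prod_{f=j+1}^m P_{s+k(n+i+d_f)} \prod_{g=1}^j P_{s+k(n+i+e_g)}$, becomes exactly $\prod_{f=j+1}^m (A_{d_f} B_i + C_{d_f} D_i) \prod_{g=1}^j (A_{e_g} B_i + C_{e_g} D_i)$, which is the matrix handled by Lemma \ref{lemma2}.

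Applying Lemma \ref{lemma2} reduces the determinant to $\prod_{0\leq i<j\leq m}(B_i D_j - B_j D_i) \cdot \prod_{1\leq i\leq j\leq m}(C_{e_i} A_{d_j} - A_{e_i} C_{d_j})$. Now I would evaluate the two products separately using the auxiliary identities already established. For the first product, $B_i D_j - B_j D_i = P_{s+k(n+i)} U_{s+k(n+m+j)} - P_{s+k(n+j)} U_{s+k(n+m+i)}$, which by (\ref{equation11}) equals $(-c)^{s+k(n+m+i)} P_{-km} U_{k(j-i)}$. For the second product, I compute
\begin{align*}
C_{e_i} A_{d_j} - A_{e_i} C_{d_j} = \frac{-(-c)^{-km}\Delta_{\mathcal{P}}}{P_{-km}^2}\big(U_{ke_i} P_{k(d_j-m)} - P_{k(e_i-m)} U_{kd_j}\big),
\end{align*}
and the parenthesised factor, by the $s=ke_i$, $i=k(d_j-e_i)$, $j=-km$ specialisation of Theorem \ref{theorem1} (the analogue of (\ref{equation12})), equals $-(-c)^{ke_i} P_{-km} U_{k(d_j-e_i)} = (-c)^{ke_i} P_{-km} U_{k(e_i-d_j)}$ after using $U_{-n}$-parity; so $C_{e_i}A_{d_j} - A_{e_i}C_{d_j} = -(-c)^{-km}(-c)^{ke_i}\Delta_{\mathcal{P}} U_{k(e_i-d_j)}/P_{-km}$.

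Finally I would multiply everything out and collect exponents by standard counting. The powers of $P_{-km}$ cancel: the first product contributes $P_{-km}^{\binom{m+1}{2}}$ and the second contributes $P_{-km}^{-\binom{m+1}{2}}$ since there are $\binom{m+1}{2}$ pairs in each range. The powers of $\Delta_{\mathcal{P}}$, of $(-c)$, and of the $U_{k\cdot}$ terms are then assembled: the diagonal-free product over $0\le i<j\le m$ of $(-c)^{s+k(n+m+i)}$ gives the bulk of the $(-c)$-exponent together with $\prod_{l=1}^m U_{kl}^{m+1-l}$ (re-indexing $l = j-i$), while the product over $1\le i\le j\le m$ contributes the remaining $(-c)^{kd_j}$ and $U_{k(e_i-d_j)}$ factors and the extra $(-c)^{ke_i-km}$ bookkeeping. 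The main obstacle is the sign and exponent bookkeeping — tracking $(-1)^{\binom{m+1}{2}}$ versus $(-c)^{\cdots}$, reconciling the $(-c)^{-km}$ per pair against the $(-c)^{kd_j}$, $(-c)^{ke_i}$ factors, and verifying that the $m$-term offsets $d_f, e_g$ combine into precisely the stated $\prod_{1\le i\le j\le m}(-c)^{kd_j} U_{k(e_i-d_j)}$ — rather than any conceptual difficulty; the structural steps are direct transcriptions of the Theorem \ref{theorem2} argument with Lemma \ref{lemma2} in place of Lemma \ref{lemma1}.
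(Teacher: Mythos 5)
Your proposal follows exactly the route the paper intends (its own proof of Theorem \ref{theorem3} is a one-line reduction to (\ref{equation10}), Lemma \ref{lemma2} and the argument of Theorem \ref{theorem2}), and the structural steps---the decomposition $P_{s+k(n+i+t)}=A_tB_i+C_tD_i$, the application of Lemma \ref{lemma2} to the product entries, and the evaluation of the row factor via (\ref{equation11})---are all correct. There is, however, one concrete error in your evaluation of the column factor: you assert
\[
U_{ke_i}P_{k(d_j-m)}-P_{k(e_i-m)}U_{kd_j}=-(-c)^{ke_i}P_{-km}U_{k(d_j-e_i)}=(-c)^{ke_i}P_{-km}U_{k(e_i-d_j)}
\]
``by $U_{-n}$-parity''. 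The reflection formula for $\mathcal{U}(x)(0,0,1;a,b,c)$ is $U_{-n}=-(-c)^{-n}U_n$ (e.g.\ $U_{-1}=1/c$), not $U_{-n}=-U_n$, so the second equality fails unless $c=-1$; the correct conclusion is $-(-c)^{ke_i}U_{k(d_j-e_i)}=(-c)^{kd_j}U_{k(e_i-d_j)}$, or more simply one applies (\ref{equation12}) with $(i,j)$ replaced by $(d_j,e_i)$ to get $U_{ke_i}P_{k(d_j-m)}-P_{k(e_i-m)}U_{kd_j}=(-c)^{kd_j}P_{-km}U_{k(e_i-d_j)}$ directly. This is not harmless bookkeeping: carried through, your version yields $\prod_{1\le i\le j\le m}(-c)^{ke_i}$ in place of the stated $\prod_{1\le i\le j\le m}(-c)^{kd_j}$, and $k\sum_{i\le j}e_i=k\sum_{i}(m+1-i)e_i$ differs from $k\sum_{i\le j}d_j=k\sum_{j}jd_j$ for general sequences $(d_i),(e_i)$. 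With the corrected factor the counting does close: the first product contributes $P_{-km}^{\binom{m+1}{2}}$, $\prod_{l=1}^mU_{kl}^{m+1-l}$ and $(-c)^{(s+kn+km)\binom{m+1}{2}+k\binom{m+1}{3}}$ (using $\sum_{i=0}^m i(m-i)=\binom{m+1}{3}$), while the second contributes $P_{-km}^{-\binom{m+1}{2}}(-\Delta_\mathcal{P})^{\binom{m+1}{2}}(-c)^{-km\binom{m+1}{2}}\prod_{1\le i\le j\le m}(-c)^{kd_j}U_{k(e_i-d_j)}$, and the cancellations give precisely the stated identity.
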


\begin{proof}
By (\ref{equation10}), Lemma \ref{lemma2} and Corollary \ref{corollary1}, the theorem can be proved in the same way as in the proof the Theorem \ref{theorem2}.
\end{proof}

\begin{lemma} \label{lemma2.5}
Let $m\geq 1$. Let $A_i, B_i$ are polynomials in variable $x$ for $0\leq i\leq m$. Let $C_i, D_i$ be rational functions in variable $x$ for $0\leq i \leq m$. Then, 
\[\det \Big(\Big[ \frac{1}{A_i D_j + B_i C_j} \Big]_{0\leq i,j\leq m}  \Big)=\frac{\prod_{0\leq i<j\leq m}(A_i B_j - A_j B_i )  (C_i D_j - D_i C_j)  }{ \prod_{0\leq i,j\leq m} (A_i D_j + B_i C_j)}\]provided that the denominators on both sides of the identity are nonzero.
\end{lemma}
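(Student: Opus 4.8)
The plan is to reduce this Cauchy-type determinant to the classical Cauchy determinant formula by clearing denominators, exactly in the spirit of the proofs of Lemma \ref{lemma1} and Lemma \ref{lemma2}. Recall the Cauchy determinant identity
\begin{align*}
\det\Big(\Big[\frac{1}{x_i+y_j}\Big]_{0\leq i,j\leq m}\Big) = \frac{\prod_{0\leq i<j\leq m}(x_j-x_i)(y_j-y_i)}{\prod_{0\leq i,j\leq m}(x_i+y_j)}.
\end{align*}
First I would work with the values of $x$ for which $B_i\neq 0$ and $C_j\neq 0$ for all $0\leq i,j\leq m$, and set $x_i = A_i/B_i$ and $y_j = D_j/C_j$. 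Then $A_i D_j + B_i C_j = B_i C_j (x_i + y_j)$, so factoring $1/(B_i C_j)$ out of the $(i,j)$ entry (the $B_i$ from row $i$, the $C_j$ from column $j$) gives
\begin{align*}
\det\Big(\Big[\frac{1}{A_i D_j + B_i C_j}\Big]_{0\leq i,j\leq m}\Big) = \Big(\prod_{i=0}^m \frac{1}{B_i C_i}\Big)\det\Big(\Big[\frac{1}{x_i+y_j}\Big]_{0\leq i,j\leq m}\Big).
\end{align*}

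Next I would substitute the Cauchy formula and simplify. On the numerator side, $x_j - x_i = (A_j B_i - A_i B_j)/(B_i B_j)$ and $y_j - y_i = (D_j C_i - D_i C_j)/(C_i C_j)$, so
\begin{align*}
\prod_{0\leq i<j\leq m}(x_j-x_i)(y_j-y_i) = \Big(\prod_{i=0}^m \frac{1}{(B_i C_i)^{m}}\Big)\prod_{0\leq i<j\leq m}(A_j B_i - A_i B_j)(D_j C_i - D_i C_j),
\end{align*}
noting each index $i$ appears in $m$ of the pairs. On the denominator side, $\prod_{0\leq i,j\leq m}(x_i+y_j) = \big(\prod_{i=0}^m (B_i C_i)^{-(m+1)}\big)\prod_{0\leq i,j\leq m}(A_i D_j + B_i C_j)$, since each index contributes $m+1$ factors. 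Combining, the net power of $\prod_i (B_i C_i)$ is $-1 - m + (m+1) = 0$, so all spurious factors cancel and we are left with
\begin{align*}
\det\Big(\Big[\frac{1}{A_i D_j + B_i C_j}\Big]_{0\leq i,j\leq m}\Big) = \frac{\prod_{0\leq i<j\leq m}(A_j B_i - A_i B_j)(D_j C_i - D_i C_j)}{\prod_{0\leq i,j\leq m}(A_i D_j + B_i C_j)},
\end{align*}
which agrees with the claimed formula since $(A_j B_i - A_i B_j)(D_j C_i - D_i C_j) = (A_i B_j - A_j B_i)(C_i D_j - D_i C_j)$.

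The main obstacle, as in the earlier lemmas, is the passage from generic $x$ to all $x$: the reduction above requires $B_i, C_j$ nonvanishing, and for rational $C_i, D_i$ one also needs their denominators nonvanishing. I would handle this exactly as Krattenthaler and the present paper do elsewhere — observe that both sides, after clearing all denominators, are polynomial identities in $x$ (or in the finitely many parameters), and two rational functions agreeing on all but finitely many points agree identically; hence the identity holds wherever the denominators on both sides are nonzero, which is the hypothesis. A minor bookkeeping point worth stating carefully is the exponent count $-1-m+(m+1)=0$ that makes the extraneous factors cancel; this is the only place a sign or index slip could creep in, so I would write out the three exponent tallies explicitly rather than asserting the cancellation.
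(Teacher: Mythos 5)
Your overall strategy is exactly the paper's: reduce to the Cauchy double alternant $\det[1/(x_i+y_j)]$ (Krattenthaler's Theorem 12) by writing the entries as quotients, clear denominators, and extend to the excluded values of $x$ by a density/continuity argument. However, as written your reduction contains a concrete error. With $x_i=A_i/B_i$ and $y_j=D_j/C_j$ you get
\[
B_iC_j(x_i+y_j)=A_iC_j+B_iD_j,
\]
which is \emph{not} the entry $A_iD_j+B_iC_j$ of the matrix in the lemma, so the asserted factorization ``$A_iD_j+B_iC_j=B_iC_j(x_i+y_j)$'' is false, and the same mismatch recurs in your evaluation of $\prod_{i,j}(x_i+y_j)$. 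The correct choice (the one the paper makes) is $y_j=C_j/D_j$, so that $A_iD_j+B_iC_j=B_iD_j(x_i+y_j)$ and the factor extracted from the $(i,j)$ entry is $1/(B_iD_j)$, giving the prefactor $\prod_i(B_iD_i)^{-1}$.

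There is a second slip at the end: the identity you invoke to match the claimed formula,
\[
(A_jB_i-A_iB_j)(D_jC_i-D_iC_j)=(A_iB_j-A_jB_i)(C_iD_j-D_iC_j),
\]
is false --- the first factors differ by a sign while the second factors are literally equal, so the two sides differ by $-1$, and carried over all pairs your argument would leave an unaccounted factor $(-1)^{\binom{m+1}{2}}$. These two errors happen to be of the kind you flagged yourself as the delicate bookkeeping point. Once you take $y_j=C_j/D_j$, the difference $y_i-y_j=(C_iD_j-C_jD_i)/(D_iD_j)$ produces exactly the factors $(C_iD_j-D_iC_j)$ of the statement with no stray sign, your exponent tally $-1-m+(m+1)=0$ goes through verbatim with $B_iD_i$ in place of $B_iC_i$, and the proof coincides with the paper's.
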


\begin{proof}
First, we invoke a result of Krattenthaler \cite[Theorem 12]{Krattenthaler}. That is,
\begin{align}
\label{equation101}  \det\Big(\Big[ \frac{1}{x_i +y_j} \Big]_{0\leq i,j\leq m}  \Big)  &= \frac{\prod_{0\leq i<j\leq m} (x_i-x_j)(y_i- y_j)}{\prod_{0\leq i,j\leq m} (x_i+y_j)}
\end{align}where $x_i$ and $y_i$ are indeterminates for $0\leq i,j\leq m$. We first assume that $A_i, B_i, C_i, D_i$ are all polynomials for all $0\leq i\leq m$. For the values of $x$ such that $B_i, D_i$ are nonzero for all $0\leq i\leq m$, let \[x_i=\frac{A_i}{B_i}, \quad y_i=\frac{C_i}{D_i} \text{ for }0\leq i\leq m.\]By similar reasoning as shown in the proof of Lemma \ref{lemma1}, we get the desired result by some algebraic simplification for the cases where $A_i, B_i, C_i, D_i$ are polynomials for all $0\leq i\leq m$. 

We extend the proof to the cases where $C_i$ and $D_i$ are rational functions by the same arguments as in the proof of Lemma \ref{lemma1}, based on the fact that the determinant of a matrix with rational functions as entries is a continuous function provided that the denominators on both sides of the identity are nonzero. 
\end{proof}

\begin{theorem} \label{theorem3.5}
Let $s, k, n$ be any integers, $m\geq 1$. Let $\mathcal{P}(x)(p,q,r;a,b,c)=(P_n)_{n\in\mathbb{Z}}$ and $\mathcal{U}(x)(0,0,1;a,b,c)=(U_n)_{n\in\mathbb{Z}}$ be the sequences of polynomials defined by real numbers $p, q, r, a, b, c$ where $c\neq 0$.  The determinant of the matrix $[1/ P_{s+k(n+i+j)}]_{0\leq i,j\leq m}$ is given by \[\det\Big(\Big[\frac{1}{P_{s+k(n+i+j)}}\Big]_{0\leq i,j\leq m}\Big)=\frac{(-1)^{(s+kn)\binom{m+1}{2}} c^{(s+kn)\binom{m+1}{2}+2k\binom{m+1}{3}}\Delta_\mathcal{P}^{\binom{m+1}{2}} \prod_{i=0}^m U_{k(i+1)}^{2(m-i)}   }{\prod_{0\leq i,j\leq m} P_{s+k(n+i+j)}}\]where $\Delta_\mathcal{P}$ is the discriminant of $\mathcal{P}(x)(p,q,r;a,b,c)$, provided that the denominators on both sides of the identity are nonzero.
\end{theorem}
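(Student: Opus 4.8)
The plan is to imitate the proof of Theorem~\ref{theorem2}, with Lemma~\ref{lemma1} replaced by Lemma~\ref{lemma2.5}. The starting point is the decomposition (\ref{equation10}), which already displays each entry $P_{s+k(n+i+j)}$ as a sum of the form $A_i D_j + B_i C_j$, where
\[
A_i = P_{s+k(n+i)},\quad B_i = U_{s+k(n+m+i)},\quad D_j = \frac{P_{k(j-m)}}{P_{-km}},\quad C_j = \frac{-(-c)^{-km}\Delta_\mathcal{P}\,U_{kj}}{P_{-km}}.
\]
Here $A_i$ and $B_i$ are polynomials in $x$ (the backward recursion only divides by the constant $c$, so $P_n$ and $U_n$ are polynomials in $x$ for every integer index), while $C_j$ and $D_j$ are rational functions in $x$ whose only denominator is the polynomial $P_{-km}$. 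Hence the hypotheses of Lemma~\ref{lemma2.5} are met, and applying it gives
\[
\det\Big(\Big[\frac{1}{P_{s+k(n+i+j)}}\Big]_{0\leq i,j\leq m}\Big)=\frac{\prod_{0\leq i<j\leq m}(A_i B_j - A_j B_i)(C_i D_j - D_i C_j)}{\prod_{0\leq i,j\leq m}(A_i D_j + B_i C_j)},
\]
and the denominator is at once $\prod_{0\leq i,j\leq m}P_{s+k(n+i+j)}$, which is the denominator in the claimed identity.

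Next I would evaluate the two families of $2\times2$ minors in the numerator using the auxiliary identities already recorded inside the proof of Theorem~\ref{theorem2}. By (\ref{equation11}),
\[
A_i B_j - A_j B_i = P_{s+k(n+i)}U_{s+k(n+m+j)} - P_{s+k(n+j)}U_{s+k(n+m+i)} = (-c)^{s+k(n+m+i)}P_{-km}\,U_{k(j-i)},
\]
and expanding $C_i D_j - D_i C_j$, clearing the common denominator $P_{-km}^2$, and simplifying the resulting bracket with (\ref{equation12}) yields
\[
C_i D_j - D_i C_j = \frac{(-c)^{k(i-m)}\Delta_\mathcal{P}}{P_{-km}}\,U_{k(j-i)}.
\]
Multiplying these, the factor $P_{-km}$ cancels and the $(i,j)$-factor of the numerator collapses to $(-c)^{\,s+kn+2ki}\,\Delta_\mathcal{P}\,U_{k(j-i)}^2$.

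Finally I would do the bookkeeping over the $\binom{m+1}{2}$ pairs $0\leq i<j\leq m$. This produces $\Delta_\mathcal{P}^{\binom{m+1}{2}}$; the total exponent of $-c$ is $\sum_{0\leq i<j\leq m}(s+kn+2ki)=(s+kn)\binom{m+1}{2}+2k\binom{m+1}{3}$, using $\sum_{i=0}^{m} i(m-i)=\binom{m+1}{3}$; and grouping the $U$-factors by the difference $l=j-i$ (which occurs $m+1-l$ times) gives $\prod_{0\leq i<j\leq m}U_{k(j-i)}^2=\prod_{l=1}^{m}U_{kl}^{2(m+1-l)}=\prod_{i=0}^{m}U_{k(i+1)}^{2(m-i)}$. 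Since $2k\binom{m+1}{3}$ is even, $(-c)$ raised to that exponent splits as $(-1)^{(s+kn)\binom{m+1}{2}}c^{(s+kn)\binom{m+1}{2}+2k\binom{m+1}{3}}$, and assembling everything gives the asserted formula.

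I do not expect a genuine obstacle here; the proof is structurally parallel to Theorem~\ref{theorem2}. The only points requiring care are the routine accounting of signs and powers of $-c$, the elementary binomial identity $\sum_{i=0}^{m} i(m-i)=\binom{m+1}{3}$, and the standing hypothesis—already built into Lemma~\ref{lemma2.5}—that all denominators in sight (in particular $P_{-km}$ and $\prod_{i,j}P_{s+k(n+i+j)}$) are nonzero, so that the rational-function manipulations and the continuity argument underlying Lemma~\ref{lemma2.5} are legitimate.
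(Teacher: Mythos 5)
Your proposal is correct and follows exactly the route the paper intends: the paper's own proof is just the one-line remark that one applies (\ref{equation10}), (\ref{equation11}), (\ref{equation12}) to Lemma~\ref{lemma2.5} and counts, and you have filled in precisely those steps with the right bookkeeping of signs, powers of $-c$, and the identity $\sum_{i=0}^m i(m-i)=\binom{m+1}{3}$.
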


\begin{proof}
The proof is essentially the same as the proof of Theorem \ref{theorem2} by applying (\ref{equation10}), (\ref{equation11}), (\ref{equation12}) to Lemma \ref{lemma2.5} and some standard counting arguments.
\end{proof}

Let $A$ be a $m\times m$ matrix. Let $A_k(i,j)$ be the determinant of the $k\times k$ submatrix of $A$ whose first entry is at the position of the $i^{th}$-row and the $j^{th}$-column of $A$. 

\begin{lemma} \label{lemma3}
Let $A$ be a $m\times m$ matrix whose entries are rational functions in variable $x$. Then
\[A_{m} (1,1) A_{m-2} (2,2)= A_{m-1} (1,1) A_{m-1} (2,2) - A_{m-1} (2,1) A_{m-1} (1,2).\]
\end{lemma}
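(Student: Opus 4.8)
The plan is to recognize this as the \emph{Desnanot--Jacobi identity} (the Dodgson condensation identity, also known as Lewis Carroll's identity) for the matrix $A$, written out for its contiguous submatrices. Deleting from $A$ the first row and first column leaves the block in rows $2,\dots,m$ and columns $2,\dots,m$, which is $A_{m-1}(2,2)$; likewise deleting the last row and last column gives $A_{m-1}(1,1)$, deleting the first row and last column gives $A_{m-1}(2,1)$, deleting the last row and first column gives $A_{m-1}(1,2)$, deleting the first and last rows and columns gives $A_{m-2}(2,2)$, and $A_m(1,1)=\det A$. So the assertion is precisely $\det A\cdot A_{m-2}(2,2)=A_{m-1}(1,1)A_{m-1}(2,2)-A_{m-1}(2,1)A_{m-1}(1,2)$.

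Since both sides are polynomials in the $m^2$ entries of $A$, it suffices to prove the identity over $\mathbb{Q}(x)$ under the extra assumption $\det A\neq 0$; the general case then follows by specialization (regard the entries as indeterminates) or, in the spirit of the continuity arguments used elsewhere in this paper, by replacing $A$ with $A+tI$, which is invertible over $\mathbb{Q}(x,t)$, and letting $t\to 0$. I would therefore assume $\det A\neq 0$ and work with $A^{-1}$, whose entries are again rational functions of $x$.

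Next I would invoke Jacobi's theorem on the minors of the adjugate: for index sets $I,J\subseteq\{1,\dots,m\}$ with $|I|=|J|$, the minor of $A^{-1}$ on rows $I$ and columns $J$ equals $(-1)^{\sum_{i\in I}i+\sum_{j\in J}j}(\det A)^{-1}$ times the complementary minor of $A$ on rows $J^{c}$ and columns $I^{c}$. Applying this with $I=J=\{1,m\}$, the sign is $(-1)^{2(1+m)}=1$ and the complementary minor is $A_{m-2}(2,2)$, so
\[
\det\begin{pmatrix}(A^{-1})_{11} & (A^{-1})_{1m}\\ (A^{-1})_{m1} & (A^{-1})_{mm}\end{pmatrix}=\frac{A_{m-2}(2,2)}{\det A}.
\]
Then I would expand the left-hand side by the cofactor formula $(A^{-1})_{ij}=(-1)^{i+j}M_{ji}/\det A$, where $M_{ji}$ is the minor of $A$ obtained by deleting row $j$ and column $i$; here $M_{11}=A_{m-1}(2,2)$, $M_{mm}=A_{m-1}(1,1)$, $M_{m1}=A_{m-1}(1,2)$ and $M_{1m}=A_{m-1}(2,1)$. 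The two factors $(-1)^{1+m}$ coming from $(A^{-1})_{1m}$ and $(A^{-1})_{m1}$ multiply to $+1$, so the $2\times 2$ determinant equals $\big(A_{m-1}(1,1)A_{m-1}(2,2)-A_{m-1}(2,1)A_{m-1}(1,2)\big)/(\det A)^{2}$. Equating this with $A_{m-2}(2,2)/\det A$ and clearing the denominator $(\det A)^{2}$ yields the claim, since $A_m(1,1)=\det A$.

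The computation is entirely routine; the one point that needs care is the sign bookkeeping — checking that the sign in Jacobi's identity is trivial for the symmetric index set $\{1,m\}$ and that the two $(-1)^{m+1}$ factors in the off-diagonal entries of $A^{-1}$ cancel. An alternative that sidesteps Jacobi's minor theorem is to prove the identity directly by Dodgson condensation or by a bordered-determinant argument, but either route comes down to essentially the same sign check.
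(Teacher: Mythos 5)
Your proof is correct, but it takes a different route from the paper. The paper disposes of this lemma in two lines: it cites the Desnanot--Jacobi identity from Amdeberhan and Zeilberger for a matrix with indeterminate entries, and then extends to rational-function entries by the same specialization/continuity device used in Lemma \ref{lemma1}. You instead give a self-contained proof of the condensation identity itself, via Jacobi's theorem on minors of $A^{-1}$ applied to the index set $\{1,m\}$, after first reducing to the invertible case (either by treating the entries as indeterminates or by the $A+tI$ perturbation). Your identification of the five submatrices with the paper's $A_k(i,j)$ notation is correct, and the sign bookkeeping checks out: the Jacobi sign $(-1)^{2(1+m)}$ is trivial and the two $(-1)^{1+m}$ factors from the off-diagonal cofactors cancel, so clearing $(\det A)^2$ gives exactly the stated identity. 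What the paper's approach buys is brevity by outsourcing the combinatorial core to a reference; what yours buys is a complete argument that does not depend on the cited source, at the cost of invoking Jacobi's minor theorem (which itself would need a reference or proof, so the gain in self-containedness is partly illusory). Either way the extension from indeterminates to rational functions is handled identically, and there is no gap.
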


\begin{proof}
We invoke the Desnanot-Jacobi identity \cite{Zeilberger} for a matrix $A$ of size $m \times m$ with indeterminates as entries. \[A_m (1,1) A_{m-2} (2,2) = A_{m-1} (1,1) A_{m-1} (2,2) -A_{m-1} (2,1) A_{m-1} (1,2).\]To extend this result to the case where the matrix $A$ has rational functions as entries, we simply use the same strategy as in the proof of Lemma \ref{lemma1}.
\end{proof}

Let $m\geq 1$. The {\it rising powers} of a sequence of polynomials $\mathcal{P}(x)(p,q,r;a,b,c)=(P_n)_{n\in\mathbb{Z}}$ is denoted by $P_n^{\langle m\rangle}$, which is defined by \[P_n^{\langle m\rangle}:= P_n P_{n+1} \cdots P_{n+m-1}.\]

\begin{theorem} \label{theorem4}
Let $n$ be any integer. Let $m\geq 1$ and $d\geq 1$. Let $\mathcal{P}(x)(p,q,r;a,b,c)=(P_n)_{n\in\mathbb{Z}}$ and $\mathcal{U}(x)(0,0,1;a,b,c)=(U_n)_{n\in\mathbb{Z}}$ be the sequences of polynomials defined by real numbers $p, q, r, a, b, c$ where $c\neq 0$. Then
\[ \det ([ P_{n+i+j}^{\langle m\rangle}]_{0\leq i,j\leq d-1} )=(-1)^{n\binom{d}{2}+\binom{d+1}{3}} c^{(n+d-2)\binom{d}{2}} \Delta_\mathcal{P}^{\binom{d}{2}} \cdot \prod_{i=1}^{d-1} \big( U_{i} U_{r+1-i} \big)^{d-i} \cdot \prod_{i=d-1}^{2(d-1)} P_{n+1}^{\langle m+1-d \rangle}   \]where $\Delta_\mathcal{P}$ is the discriminant of $\mathcal{P}(x)(p,q,r;a,b,c)$.
\end{theorem}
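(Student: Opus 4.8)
The plan is to argue by induction on $d$ with $m$ held fixed, the engine being the Desnanot--Jacobi identity of Lemma~\ref{lemma3}. Write $T_d(n):=\det([P_{n+i+j}^{\langle m\rangle}]_{0\le i,j\le d-1})$ and let $R_d(n)$ denote the asserted right-hand side. The base case $d=1$ is immediate: $T_1(n)=P_n^{\langle m\rangle}$, while on the right every binomial coefficient vanishes, the $U$-product is empty, and the $P$-product is $P_n^{\langle m\rangle}$, so $T_1(n)=R_1(n)$. The case $d=2$, where $T_2(n)=P_n^{\langle m\rangle}P_{n+2}^{\langle m\rangle}-(P_{n+1}^{\langle m\rangle})^2$, follows from a direct computation: one factors out the product of $P$'s common to the two terms and is left with a single Catalan-type binomial, evaluated by Corollary~\ref{corollary1}, after which collecting factors reproduces $R_2(n)$.

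For the inductive step I would apply Lemma~\ref{lemma3} to the matrix $A=[P_{n+i+j}^{\langle m\rangle}]_{0\le i,j\le d-1}$. Since the $(i,j)$ entry depends only on $i+j$, each minor appearing in Lemma~\ref{lemma3} is again a Hankel determinant of the same form with a shifted starting index: $A_d(1,1)=T_d(n)$, $A_{d-1}(1,1)=T_{d-1}(n)$, $A_{d-1}(2,2)=T_{d-1}(n+2)$, $A_{d-2}(2,2)=T_{d-2}(n+2)$, and $A_{d-1}(2,1)=A_{d-1}(1,2)=T_{d-1}(n+1)$. Lemma~\ref{lemma3} then gives the recurrence
\[ T_d(n)\,T_{d-2}(n+2)=T_{d-1}(n)\,T_{d-1}(n+2)-T_{d-1}(n+1)^2 . \]
As $c\neq 0$, the polynomial $T_{d-2}(n+2)$ is not identically zero (for generic choices of the parameters all the $U_\ell$, $P_\ell$ and $\Delta_\mathcal{P}$ involved are nonzero, and the continuity argument used repeatedly above then removes genericity), so it is enough to verify the polynomial identity $R_d(n)\,T_{d-2}(n+2)=T_{d-1}(n)\,T_{d-1}(n+2)-T_{d-1}(n+1)^2$ after substituting the induction hypothesis for $T_{d-1}$ and $T_{d-2}$.

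That verification is the heart of the argument and the step I expect to be the main obstacle. After substitution, the powers of $c$ and $\Delta_\mathcal{P}$ and the $n$-independent factor $\prod_{i=1}^{d-2}(U_iU_{m+1-i})^{d-1-i}$ occur to the same total degree in $T_{d-1}(n)T_{d-1}(n+2)$ and in $T_{d-1}(n+1)^2$, so they factor out and one is left with a difference $R(n)R(n+2)-R(n+1)^2$ of products of rising powers, where $R(n)=\prod_{i=d-2}^{2d-4}P_{n+i}^{\langle m+2-d\rangle}$. Expanding the rising powers and dividing out the maximal product of $P$'s common to $R(n)R(n+2)$ and $R(n+1)^2$ collapses this difference to a single Catalan-type binomial $P_\alpha P_\beta-P_\gamma P_\delta$ with $\alpha+\beta=\gamma+\delta$, which by Corollary~\ref{corollary1} equals $\pm(-c)^{n+d-2}\Delta_\mathcal{P}\,U_{d-1}U_{m+2-d}$; here $\{d-1,\,m+2-d\}$ is precisely the index pair that telescoping $\prod_{i=1}^{d-1}(U_iU_{m+1-i})^{d-i}$ across consecutive values of $d$ calls for. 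Matching this against $R_d(n)T_{d-2}(n+2)$ is then routine bookkeeping, using $\binom{d}{2}=\binom{d-1}{2}+(d-1)$, $\binom{d-2}{2}=\binom{d-1}{2}-(d-2)$ and $\binom{d+1}{3}=\binom{d}{3}+\binom{d}{2}$ for the exponents of $c$ and $\Delta_\mathcal{P}$ and for the sign, together with a comparison of the rising-power index ranges $\prod_{i=d-1}^{2d-2}$, $\prod_{i=d-2}^{2d-4}$ and $\prod_{i=d-3}^{2d-6}$ at levels $d$, $d-1$ and $d-2$. The genuinely delicate point is this last comparison: identifying which $P_{n+t}$ are common to $R(n)$, $R(n+2)$ and $R(n+1)^2$ so that the difference collapses to a single binomial. (When $d>m+1$ one argues separately: the sequence $(P_{n+k}^{\langle m\rangle})_k$ satisfies an order-$(m+1)$ linear recurrence over the field of rational functions in $x$, so the $d$ rows of $A$ have rank at most $m+1<d$ and $T_d(n)=0$, in agreement with the convention $P_n^{\langle k\rangle}=0$ for $k<0$.)
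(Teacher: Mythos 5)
Your proposal follows essentially the same route as the paper, which proves Theorem~\ref{theorem4} by induction on $d$ using the Desnanot--Jacobi identity of Lemma~\ref{lemma3} together with the generalized Catalan identity of Theorem~\ref{theorem1} (via Corollary~\ref{corollary1}), deferring the bookkeeping to the analogous argument of Tangboonduangjit and Thanatipanonda. Your sketch correctly identifies the Hankel recurrence, the base cases, the collapse to a single Catalan-type binomial, and the degenerate case $d>m+1$, so it is a faithful (indeed more detailed) rendition of the paper's intended proof.
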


\begin{proof}
The proof is based on induction on $d$, Lemma \ref{lemma3} and Theorem \ref{theorem1}. It is essentially identical to the proof of Theorem 2.1 in the work of Tangboonduangjit and Thanatipanonda \cite{Aek2} and hence we skip it. 
\end{proof}

If we set $p=q=b=0$ and $r=a=c=1$, then we get the sequence of Fibonacci polynomials in $\mathcal{P}(x)(0,0,1;1,0,1)=(F_n(x))_{n\in\mathbb{Z}}$ where the sequence $(F_n(x))_{n\in\mathbb{Z}}$ is defined by \[F_0 (x)\equiv 0,\quad F_1 (x)\equiv 1, \quad F_{n+2}(x)=x F_{n+1}(x)+F_n (x).\]We recover the Fibonacci numbers and Pell numbers by evaluating $F_n(x)$ at $x=1$ and $x=2$ respectively. We note that $\Delta_\mathcal{P} =1$ and $\mathcal{U}(x)(0,0,1;1,0,1)=(F_n(x))_{n\in \mathbb{Z}}$. By Theorem \ref{theorem2}, Theorem \ref{theorem3}, Theorem \ref{theorem3.5}  and Theorem \ref{theorem4}, we get the following corollary:

\begin{corollary} \label{corollary2}
Let $m\geq 1$ and $d\geq 1$. Let $s,k, n$ be any integers. Let $(d_i)_{1\leq i\leq m}$ and $(e_i)_{1\leq i\leq m}$ be sequences of integers. Then
\begin{equation}
\begin{aligned}
 \det([(F_{s+k(n+i+j)}(x))^m]_{0\leq i,j\leq m})={}& (-1)^{(s+kn+1)\binom{m+1}{2}} \cdot\prod_{i=0}^m \binom{m}{i} (F_{(i+1)k}(x))^{2(m-i)},\\
 \det \Big( \Big[\prod_{f=j+1}^m F_{s+k(n+i+d_f)}(x) \prod_{g=1}^j F_{s+k(n+i+e_g)}(x)\Big]_{0\leq i,j\leq m}   \Big) ={} & (-1)^{(s+kn+1)\binom{m+1}{2}+k\binom{m+1}{3}} \prod_{l=1}^m (F_{kl}(x))^{m+1-l}\cdot\\ \nonumber  &\prod_{1\leq i\leq j\leq m} (-1)^{kd_j} F_{k(e_i-d_j)}(x),\\
\det\Big(\Big[\frac{1}{F_{s+k(n+i+j)}(x)}\Big]_{0\leq i,j\leq m}\Big)={}&\frac{(-1)^{(s+kn)\binom{m+1}{2}} \prod_{i=0}^m (F_{k(i+1)}(x))^{2(m-i)}   }{\prod_{0\leq i,j\leq m} F_{s+k(n+i+j)}(x)} ,\\
\det ([ (F_{n+i+j}(x))^{\langle m\rangle}]_{0\leq i,j\leq d-1} )={}&  (-1)^{n\binom{d}{2}+\binom{d+1}{3}}\cdot \prod_{i=1}^{d-1} \big( F_{i}(x) F_{m+1-i}(x) \big)^{d-i} \cdot \\ \nonumber & \prod_{i=d-1}^{2(d-1)} (F_{n+1}(x))^{\langle m+1-d \rangle}.
\end{aligned}
\end{equation}
\end{corollary}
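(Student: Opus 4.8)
The plan is to derive all four identities as immediate specializations of Theorems \ref{theorem2}, \ref{theorem3}, \ref{theorem3.5} and \ref{theorem4} to the Fibonacci polynomial sequence, so that essentially all the work is checking that the parameter choice $p=q=b=0$, $r=a=c=1$ collapses each general formula to the asserted one.

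First I would record the three facts that make the specialization clean. With $p=q=b=0$ and $r=a=c=1$ the defining data of $\mathcal{P}(x)(0,0,1;1,0,1)=(P_n)$ are $P_0=0$, $P_1=0\cdot x+1=1$ and $P_{n+2}=(1\cdot x+0)P_{n+1}+1\cdot P_n=xP_{n+1}+P_n$, so $P_n=F_n(x)$; moreover $\mathcal{U}(x)(0,0,1;1,0,1)$ has exactly the same initial data and recurrence, whence $U_n=F_n(x)$ as well (indeed $\mathcal{P}=\mathcal{U}$ in this case). Second, substituting these values into $\Delta_\mathcal{P}=(q^2-apq)x^2+(2qr-apr-bpq)x+(r^2-bpr-cp^2)$ gives $x^2$-coefficient $0$, $x$-coefficient $0$ and constant term $1$, so $\Delta_\mathcal{P}=1$. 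Third, since $c=1$ every factor $c^{e}$ appearing in the four theorems equals $1$ and every factor $(-c)^{e}$ equals $(-1)^{e}$.

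It then remains to feed $\Delta_\mathcal{P}=1$, $U_n=P_n=F_n(x)$ and $c=1$ into each theorem and tidy up the powers of $-1$. Theorem \ref{theorem2} directly produces the first identity, its prefactor $(-1)^{(s+kn+1)\binom{m+1}{2}}\cdot 1\cdot 1$ already being in the stated form. For the second identity, Theorem \ref{theorem3} yields $(-1)^{\binom{m+1}{2}}$ from $(-\Delta_\mathcal{P})^{\binom{m+1}{2}}$ and $(-1)^{(s+kn)\binom{m+1}{2}+k\binom{m+1}{3}}$ from $(-c)^{(s+kn)\binom{m+1}{2}+k\binom{m+1}{3}}$; adding the exponents gives $(-1)^{(s+kn+1)\binom{m+1}{2}+k\binom{m+1}{3}}$, and each remaining $(-c)^{kd_j}$ becomes $(-1)^{kd_j}$, which is exactly the claimed right-hand side. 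The third identity is Theorem \ref{theorem3.5} read verbatim after setting $c=\Delta_\mathcal{P}=1$. The fourth is Theorem \ref{theorem4} with $c=\Delta_\mathcal{P}=1$ and $U_n=P_n=F_n(x)$, reading the index bound $m$ in the factor $U_{m+1-i}$ so that the tail $\prod_{i=d-1}^{2(d-1)}P_{n+1}^{\langle m+1-d\rangle}$ matches.

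I do not expect a genuine obstacle, since the content of the corollary lies entirely in the four theorems it cites; the only care required is the $-1$-exponent arithmetic for the second identity, where $(-\Delta_\mathcal{P})$ and $(-c)$ must be combined into a single prefactor, and the bookkeeping in the fourth identity, where one must keep track that $m$ (not $d$) is the length of the original rising-power product and silently correct the evident misprint $U_{r+1-i}$ for $U_{m+1-i}$ in the statement of Theorem \ref{theorem4} when applying it.
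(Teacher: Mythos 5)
Your proposal is correct and matches the paper's own (essentially one-line) proof: the corollary is obtained exactly by specializing Theorems \ref{theorem2}, \ref{theorem3}, \ref{theorem3.5} and \ref{theorem4} to $\mathcal{P}(x)(0,0,1;1,0,1)$, noting $\Delta_\mathcal{P}=1$, $U_n=F_n(x)$ and $c=1$, and combining the powers of $-1$. Your sign bookkeeping for the second identity checks out, and you are right that $U_{r+1-i}$ in Theorem \ref{theorem4} is a misprint for $U_{m+1-i}$.
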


\begin{remark}
We recover the identities (\ref{equation2}) and (\ref{equation4}) by setting $x=1$ in the first identity and the last identity in Corollary \ref{corollary2} respectively.
\end{remark}

\begin{remark}
We recover the results shown by Alfred \cite{Alfred2} by setting $x=1, s=0, k=1, n=0$ and $d_i\equiv 0$, $e_i\equiv 1$ for all $1\leq i,j\leq m$ in the second identity in Corollary \ref{corollary2}.
\end{remark}

\begin{remark}
We get the identity (\ref{new}) by setting $x=1$, $s=0$, $k=n=1$ in the third identity in Corollary \ref{corollary2}.
\end{remark}

If we set $p=2$, $q=a=c=1$ and $r=b=0$, then we get the sequence of Lucas polynomials in $\mathcal{P}(x)(2,1,0;1,0,1)=(L_n(x))_{n\in\mathbb{Z}}$ where the sequence $(L_n(x))_{n\in\mathbb{Z}}$ is defined by \[ L_0 (x)\equiv 2, \quad L_1(x)\equiv x, \quad L_{n+2}(x) =x L_{n+1}(x) +L_n(x).\]We recover the Lucas numbers by evaluating $L_n(x)$ at $x=1$. We note that $\Delta_\mathcal{P}=(-x^2 -4)$ and $\mathcal{U}(x)(0,0,1;1,0,1)=(F_n(x))_{n\in\mathbb{Z}}$. By Theorem \ref{theorem2}, Theorem \ref{theorem3}, Theorem \ref{theorem3.5}  and Theorem \ref{theorem4}, we get the following corollary:

\begin{corollary} \label{corollary3}
Let $m\geq 1$ and $d\geq 1$. Let $s,k, n$ be any integers. Let $(d_i)_{1\leq i\leq m}$ and $(e_i)_{1\leq i\leq m}$ be sequences of integers. 
\begin{equation}
\begin{aligned}
 \det([(L_{s+k(n+i+j)}(x))^m]_{0\leq i,j\leq m})={}& (-1)^{(s+kn)\binom{m+1}{2}} (x^2 +4)^{\binom{m+1}{2}} \cdot \\ \nonumber &\prod_{i=0}^m \binom{m}{i} (F_{(i+1)k}(x))^{2(m-i)},\\
 \det \Big( \Big[\prod_{f=j+1}^m L_{s+k(n+i+d_f)}(x) \prod_{g=1}^j L_{s+k(n+i+e_g)}(x)\Big]_{0\leq i,j\leq m}   \Big) ={} & (-1)^{(s+kn)\binom{m+1}{2}+k\binom{m+1}{3}} (x^2+4)^{\binom{m+1}{2}} \\ \nonumber &\prod_{l=1}^m (F_{kl}(x))^{m+1-l}\cdot\prod_{1\leq i\leq j\leq m} (-1)^{kd_j} F_{k(e_i-d_j)}(x),\\
\det\Big(\Big[\frac{1}{L_{s+k(n+i+j)}(x)}\Big]_{0\leq i,j\leq m}\Big)={}&\frac{(-1)^{(s+kn+1)\binom{m+1}{2}} (x^2+4)^{\binom{m+1}{2}}}{\prod_{0\leq i,j\leq m} L_{s+k(n+i+j)}(x)}\cdot \\ &\prod_{i=0}^m (F_{k(i+1)}(x))^{2(m-i)} \\
\det ([ (L_{n+i+j}(x))^{\langle m\rangle}]_{0\leq i,j\leq d-1} )={}& (-1)^{(n+1)\binom{d}{2}+\binom{d+1}{3}} (x^2+4)^{\binom{d}{2}}\cdot\\ \nonumber &\prod_{i=1}^{d-1} \big( F_{i}(x) F_{m+1-i}(x) \big)^{d-i} \cdot \prod_{i=d-1}^{2(d-1)} (L_{n+1}(x))^{\langle m+1-d \rangle}.
\end{aligned}
\end{equation}
\end{corollary}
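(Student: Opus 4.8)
The plan is to obtain all four identities of Corollary \ref{corollary3} as direct specializations of Theorems \ref{theorem2}, \ref{theorem3}, \ref{theorem3.5} and \ref{theorem4} to the parameter choice $p=2$, $q=a=c=1$, $r=b=0$. The first thing I would do is confirm that this choice really produces the Lucas polynomials: the definition gives $P_0=p=2$, $P_1=qx+r=x$ and $P_{n+2}=(ax+b)P_{n+1}+cP_n=xP_{n+1}+P_n$, which is exactly the recurrence for $(L_n(x))_{n\in\mathbb{Z}}$, and since the backward rule $P_n=-\tfrac{ax+b}{c}P_{n+1}+\tfrac1c P_{n+2}$ likewise matches, the two-sided sequences agree. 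Next I would evaluate the discriminant from its definition: with these values $q^2-apq=1-2=-1$, $2qr-apr-bpq=0$, and $r^2-bpr-cp^2=-4$, so $\Delta_{\mathcal{P}}=-x^2-4=-(x^2+4)$. Finally, the companion sequence $\mathcal{U}(x)(0,0,1;1,0,1)$ satisfies $U_0=0$, $U_1=1$, $U_{n+2}=xU_{n+1}+U_n$, i.e. $U_n=F_n(x)$, the Fibonacci polynomials.

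With these three facts in hand, each identity follows by substitution. For the first identity I would plug $c=1$ and $\Delta_{\mathcal{P}}^{\binom{m+1}{2}}=(-1)^{\binom{m+1}{2}}(x^2+4)^{\binom{m+1}{2}}$ into Theorem \ref{theorem2}; the exponent of $-1$ becomes $(s+kn+1)\binom{m+1}{2}+\binom{m+1}{2}=(s+kn+2)\binom{m+1}{2}\equiv (s+kn)\binom{m+1}{2}\pmod 2$, giving the stated form with $U_{k(i+1)}$ replaced by $F_{k(i+1)}(x)$. The third identity comes the same way from Theorem \ref{theorem3.5}, where the single extra factor $(-1)^{\binom{m+1}{2}}$ turns the sign $(-1)^{(s+kn)\binom{m+1}{2}}$ into $(-1)^{(s+kn+1)\binom{m+1}{2}}$. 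For the second identity I would substitute into Theorem \ref{theorem3}, using $(-\Delta_{\mathcal{P}})^{\binom{m+1}{2}}=(x^2+4)^{\binom{m+1}{2}}$ and $(-c)^{\bullet}=(-1)^{\bullet}$; no sign adjustment is needed here, and again $U\mapsto F(x)$. For the fourth identity I would substitute into Theorem \ref{theorem4} with $c=1$, $\Delta_{\mathcal{P}}^{\binom{d}{2}}=(-1)^{\binom{d}{2}}(x^2+4)^{\binom{d}{2}}$, and $P_{n+1}^{\langle m+1-d\rangle}=(L_{n+1}(x))^{\langle m+1-d\rangle}$; the extra $(-1)^{\binom{d}{2}}$ changes the exponent $n\binom{d}{2}+\binom{d+1}{3}$ into $(n+1)\binom{d}{2}+\binom{d+1}{3}$, matching the claim (here the factor $U_{m+1-i}=F_{m+1-i}(x)$, reading the $U_{r+1-i}$ of Theorem \ref{theorem4} as the intended $U_{m+1-i}$, consistent with Corollary \ref{corollary2}).

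Since every step is a mechanical substitution, there is no genuine obstacle; the only place demanding care is the parity bookkeeping of the $(-1)^{\binom{m+1}{2}}$ and $(-1)^{\binom{d}{2}}$ factors that arise from pulling the sign out of $\Delta_{\mathcal{P}}$, together with the harmless cancellation $(-1)^{2\binom{m+1}{2}}=1$. One should also check that the powers of $c$ appearing in each theorem, being raised on $c=1$, contribute nothing, so that the only surviving algebraic factors beyond the Fibonacci polynomials are the powers of $x^2+4$ coming from $\Delta_{\mathcal{P}}$.
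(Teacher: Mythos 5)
Your proposal is correct and follows exactly the paper's route: the paper likewise obtains Corollary \ref{corollary3} by specializing Theorems \ref{theorem2}, \ref{theorem3}, \ref{theorem3.5} and \ref{theorem4} to $p=2$, $q=a=c=1$, $r=b=0$, noting $\Delta_{\mathcal{P}}=-(x^2+4)$ and $U_n=F_n(x)$. Your explicit parity bookkeeping for the signs absorbed from $\Delta_{\mathcal{P}}^{\binom{m+1}{2}}$ and $\Delta_{\mathcal{P}}^{\binom{d}{2}}$ (and your reading of $U_{r+1-i}$ as $U_{m+1-i}$) just makes precise what the paper leaves implicit.
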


If we set $p=q=1$, $a=2$, $c=-1$ and $r=b=0$, then we get the sequence of Chebyshev polynomials of the first kind in $\mathcal{P}(x)(1,1,0;2,0,-1)=(T_n(x))_{n\in\mathbb{Z}}$ where the sequence $(T_n(x))_{n\in\mathbb{Z}}$ is defined by \[T_0(x)\equiv 1, \quad T_1(x)\equiv x, \quad T_{n+2}(x)=2x T_{n+1}(x) -T_n(x).\]We note that $\Delta_\mathcal{P}=(-x^2+1)$.

If we set $p=1$, $a=q=2$, $c=-1$ and $r=b=0$, then we get the sequence of Chebyshev polynomials of the second kind in $\mathcal{P}(x)(1,2,0;2,0,-1)=(S_n(x))_{n\in\mathbb{Z}}$ where the sequence $(S_n(x))_{n\in\mathbb{Z}}$ is defined by \[S_0(x)\equiv 1, \quad S_1(x)\equiv 2x, \quad S_{n+2}(x)=2x S_{n+1}(x) -S_n(x).\]We note that $\Delta_\mathcal{P}=(-2x^2+1)$. 

We note that \[\mathcal{U}(x)(0,0,1;2,0,-1)=(U_n(x))_{n\in\mathbb{Z}}\]where \[U_n(x)=S_{n-1}(x)\text{ for }n\in\mathbb{Z} .\]We get two corollaries by Theorem \ref{theorem2}, Theorem \ref{theorem3}, Theorem \ref{theorem3.5} and Theorem \ref{theorem4}.

\begin{corollary} \label{corollary4}
Let $m\geq 1$ and $d\geq 1$. Let $s,k, n$ be any integers. Let $(d_i)_{1\leq i\leq m}$ and $(e_i)_{1\leq i\leq m}$ be sequences of integers. Then
\begin{equation}
\begin{aligned}\nonumber
\det([(T_{s+k(n+i+j)}(x))^m]_{0\leq i,j\leq m})={}& (x^2 -1)^{\binom{m+1}{2}} \cdot\\ &\prod_{i=0}^m \binom{m}{i} (S_{(i+1)k-1}(x))^{2(m-i)},\\
 \det \Big( \Big[\prod_{f=j+1}^m T_{s+k(n+i+d_f)}(x) \prod_{g=1}^j T_{s+k(n+i+e_g)}(x)\Big]_{0\leq i,j\leq m}   \Big) ={}& (x^2-1)^{\binom{m+1}{2}} \prod_{l=1}^m (S_{kl-1}(x))^{m+1-l}\cdot\\ &\prod_{1\leq i\leq j\leq m} S_{k(e_i-d_j)-1}(x),\\ 
\det\Big(\Big[\frac{1}{T_{s+k(n+i+j)}(x)}\Big]_{0\leq i,j\leq m}\Big)={}&\frac{ (-x^2+1)^{\binom{m+1}{2}}\prod_{i=0}^m (S_{k(i+1)-1}(x))^{2(m-i)}}{\prod_{0\leq i,j\leq m} T_{s+k(n+i+j)}(x)}, \\ 
 \det ([ (T_{n+i+j}(x))^{\langle m\rangle}]_{0\leq i,j\leq d-1} )={}& (-1)^{d\binom{d}{2}+\binom{d+1}{3}} (-x^2+1)^{\binom{d}{2}}\cdot \\ &\prod_{i=1}^{d-1} \big( S_{i-1}(x) S_{m-i}(x) \big)^{d-i} \cdot \prod_{i=d-1}^{2(d-1)} (T_{n+1}(x))^{\langle m+1-d \rangle}.   
\end{aligned}
\end{equation}
\end{corollary}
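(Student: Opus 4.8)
The plan is to obtain all four identities by specializing Theorems \ref{theorem2}, \ref{theorem3}, \ref{theorem3.5} and \ref{theorem4} to the parameter choice $p=q=1$, $r=b=0$, $a=2$, $c=-1$, which by the definition of $\mathcal{P}(x)(p,q,r;a,b,c)$ yields precisely the sequence of Chebyshev polynomials of the first kind, $(T_n(x))_{n\in\mathbb{Z}}$. Since $c=-1\neq 0$, the hypotheses of all four theorems are met, so each of them applies verbatim to this sequence.

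First I would record the two pieces of data those theorems require. Directly from the definition of the discriminant, substituting $p=q=1$, $r=b=0$, $c=-1$ gives $\Delta_\mathcal{P}=(1-2)x^2+0\cdot x+(0-0+1)=-x^2+1$, as already noted in the text immediately preceding the corollary. Next, the sequence $\mathcal{U}(x)(0,0,1;2,0,-1)$ satisfies $U_0=0$, $U_1=1$, $U_{n+2}=2xU_{n+1}-U_n$, hence (including negative indices, via the common backward recurrence) coincides with the shift $U_n(x)=S_{n-1}(x)$ of the Chebyshev polynomials of the second kind, which is also recorded in the excerpt. Using this, every factor $U_{k(i+1)}$, $U_{kl}$, $U_{k(e_i-d_j)}$, $U_i$, $U_{m+1-i}$ occurring in Theorems \ref{theorem2}--\ref{theorem4} is rewritten as $S_{k(i+1)-1}$, $S_{kl-1}$, $S_{k(e_i-d_j)-1}$, $S_{i-1}$, $S_{m-i}$ respectively, which already produces the polynomial products on the right-hand sides of the corollary.

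What remains is sign-and-constant bookkeeping, carried out for each identity in turn. Since $c=-1$, each power $c^N$ collapses to $(-1)^N$, and the manifestly even exponents that occur (the $2k\binom{m+1}{3}$ in Theorems \ref{theorem2} and \ref{theorem3.5}, and the shift by $-2$ inside $(n+d-2)\binom{d}{2}$ in Theorem \ref{theorem4}) contribute nothing. One then gathers the explicit $(-1)^{\cdots}$ prefactor of the theorem, the $c^{\cdots}=(-1)^{\cdots}$ factor, and the factor coming from $\Delta_\mathcal{P}^{N}$ (left as $(-x^2+1)^N$ where the corollary writes it that way, in the third and fourth identities, or written as $(-1)^N(x^2-1)^N$ in the first and second). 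For the first identity the three $(-1)$-exponents sum to $[(s+kn+1)+(s+kn)+1]\binom{m+1}{2}=(2s+2kn+2)\binom{m+1}{2}$, an even multiple, so the net sign is $+1$ and the constant factor is $(x^2-1)^{\binom{m+1}{2}}$. For Theorem \ref{theorem3} one uses $-c=1$, so that $(-\Delta_\mathcal{P})^{\binom{m+1}{2}}=(x^2-1)^{\binom{m+1}{2}}$ and every $(-c)^{kd_j}$ equals $1$. For Theorem \ref{theorem3.5} the net $(-1)$-power is absorbed into $\Delta_\mathcal{P}^{\binom{m+1}{2}}$, giving $(-x^2+1)^{\binom{m+1}{2}}$. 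For Theorem \ref{theorem4} the prefactor $(-1)^{n\binom{d}{2}+\binom{d+1}{3}}$ combines with $c^{(n+d-2)\binom{d}{2}}=(-1)^{(n+d)\binom{d}{2}}$, the $n$-terms cancelling modulo $2$, to give exactly $(-1)^{d\binom{d}{2}+\binom{d+1}{3}}$, with $\Delta_\mathcal{P}^{\binom{d}{2}}$ kept as $(-x^2+1)^{\binom{d}{2}}$.

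The only genuine subtlety here is this parity accounting in the last two identities, where the published form (a power of $-x^2+1$ alongside a separate $(-1)$-power) must be reconciled with what the general theorems output (a power of $\Delta_\mathcal{P}$ alongside a separate $(-1)$-power); it rests solely on the identities $(-1)^N(x^2-1)^N=(-x^2+1)^N$ and $(-1)^{2M}=1$, and nothing deeper is involved. Finally, I would carry over, as in Theorem \ref{theorem3.5}, the proviso that the third identity holds only when the denominators on both sides are nonzero.
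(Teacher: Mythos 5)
Your proposal is correct and follows exactly the route the paper intends: specialize Theorems \ref{theorem2}, \ref{theorem3}, \ref{theorem3.5} and \ref{theorem4} to $\mathcal{P}(x)(1,1,0;2,0,-1)$, use $\Delta_\mathcal{P}=-x^2+1$ and $U_n(x)=S_{n-1}(x)$, and carry out the sign bookkeeping with $c=-1$. The paper leaves all of this implicit, and your parity computations (each net exponent of $-1$ being even in the first and third identities, $-c=1$ killing the $(-c)^{kd_j}$ factors in the second, and $(-1)^{(2n+d)\binom{d}{2}}=(-1)^{d\binom{d}{2}}$ in the fourth) check out.
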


\begin{corollary} \label{corollary5}
Let $m\geq 1$ and $d\geq 1$. Let $s,k, n$ be any integers. Let $(d_i)_{1\leq i\leq m}$ and $(e_i)_{1\leq i\leq m}$ be sequences of integers. Then
\begin{equation}
\begin{aligned}
\det([(S_{s+k(n+i+j)}(x))^m]_{0\leq i,j\leq m})={}& (2x^2 -1)^{\binom{m+1}{2}} \cdot\prod_{i=0}^m \binom{m}{i} (S_{(i+1)k-1}(x))^{2(m-i)},\\
 \det \Big( \Big[\prod_{f=j+1}^m S_{s+k(n+i+d_f)}(x) \prod_{g=1}^j S_{s+k(n+i+e_g)}(x)\Big]_{0\leq i,j\leq m}   \Big)  ={} & (2x^2-1)^{\binom{m+1}{2}} \prod_{l=1}^m (S_{kl-1}(x))^{m+1-l}\cdot\\ \nonumber  & \prod_{1\leq i\leq j\leq m} S_{k(e_i-d_j)-1}(x),\\
\det\Big(\Big[\frac{1}{S_{s+k(n+i+j)}(x)}\Big]_{0\leq i,j\leq m}\Big)={}&\frac{ (-2x^2+1)^{\binom{m+1}{2}}\prod_{i=0}^m (S_{k(i+1)-1}(x))^{2(m-i)}}{\prod_{0\leq i,j\leq m} S_{s+k(n+i+j)}(x)}, \\ 
 \det ([ (S_{n+i+j}(x))^{\langle m\rangle}]_{0\leq i,j\leq d-1} )={}& (-1)^{d\binom{d}{2}+\binom{d+1}{3}} (-2x^2+1)^{\binom{d}{2}}\cdot\\  &\prod_{i=1}^{d-1} \big( S_{i-1}(x) S_{m-i}(x) \big)^{d-i} \cdot \prod_{i=d-1}^{2(d-1)} (S_{n+1}(x))^{\langle m+1-d \rangle}. 
\end{aligned}
\end{equation}
\end{corollary}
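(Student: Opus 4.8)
The plan is to obtain all four identities by specializing Theorem \ref{theorem2}, Theorem \ref{theorem3}, Theorem \ref{theorem3.5} and Theorem \ref{theorem4} to the parameters $p=1$, $a=q=2$, $c=-1$, $r=b=0$ that generate $\mathcal{P}(x)(1,2,0;2,0,-1)=(S_n(x))_{n\in\mathbb{Z}}$. The only two ingredients needed are the facts recorded immediately before the statement: $\Delta_\mathcal{P}=-2x^2+1$ and $\mathcal{U}(x)(0,0,1;2,0,-1)=(U_n(x))_{n\in\mathbb{Z}}$ with the index shift $U_n(x)=S_{n-1}(x)$. Applying this shift turns $U_{k(i+1)}$, $U_{kl}$, $U_{k(e_i-d_j)}$, $U_i$ and $U_{m+1-i}$ in the general formulas into $S_{k(i+1)-1}$, $S_{kl-1}$, $S_{k(e_i-d_j)-1}$, $S_{i-1}$ and $S_{m-i}$ respectively, which already produces the Chebyshev factors on the right-hand sides.

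The remaining work is to simplify the scalar prefactors. Since $c=-1$, every factor of the form $(-c)^N$ equals $1$; this instantly removes the $(-c)^{(s+kn)\binom{m+1}{2}+k\binom{m+1}{3}}$ and $(-c)^{kd_j}$ from the Theorem \ref{theorem3} formula, and combined with $(-\Delta_\mathcal{P})^{\binom{m+1}{2}}=(2x^2-1)^{\binom{m+1}{2}}$ this yields the second identity at once. For the other three I would write $\Delta_\mathcal{P}^{N}=(-1)^{N}(2x^2-1)^{N}$ and $c^{M}=(-1)^{M}$ and then use that exponents such as $2k\binom{m+1}{3}$ and $2(s+kn)\binom{m+1}{2}$ are even. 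In the power identity of Theorem \ref{theorem2}, the product $(-1)^{(s+kn+1)\binom{m+1}{2}}(-1)^{\binom{m+1}{2}}(-1)^{(s+kn)\binom{m+1}{2}}$ has exponent $2\big((s+kn)+1\big)\binom{m+1}{2}$ and collapses to $+1$, leaving $(2x^2-1)^{\binom{m+1}{2}}$. In the reciprocal identity of Theorem \ref{theorem3.5} the analogous cancellation leaves $(-1)^{\binom{m+1}{2}}(2x^2-1)^{\binom{m+1}{2}}=(-2x^2+1)^{\binom{m+1}{2}}$. In the rising-power identity of Theorem \ref{theorem4}, combining $(-1)^{n\binom{d}{2}+\binom{d+1}{3}}$ with $c^{(n+d-2)\binom{d}{2}}=(-1)^{(n+d)\binom{d}{2}}$ gives exponent $(2n+d)\binom{d}{2}+\binom{d+1}{3}\equiv d\binom{d}{2}+\binom{d+1}{3}$, while $\Delta_\mathcal{P}^{\binom{d}{2}}=(-2x^2+1)^{\binom{d}{2}}$ is carried along unchanged, and the factor $P_{n+1}^{\langle m+1-d\rangle}$ becomes $(S_{n+1}(x))^{\langle m+1-d\rangle}$.

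I do not anticipate any genuine difficulty; the only point demanding care is parity bookkeeping. In particular, the single minus sign concealed in $\Delta_\mathcal{P}=-(2x^2-1)$ is what makes the prefactor of the reciprocal identity $(-2x^2+1)^{\binom{m+1}{2}}$ rather than $(2x^2-1)^{\binom{m+1}{2}}$, and the shift $U_n(x)=S_{n-1}(x)$ must be applied uniformly (so $U_{m+1-i}$ becomes $S_{m-i}$, not $S_{m+1-i}$). With these substitutions in hand, each of the four identities follows immediately, so the write-up only needs to invoke Theorem \ref{theorem2}, Theorem \ref{theorem3}, Theorem \ref{theorem3.5} and Theorem \ref{theorem4} in turn and record the simplified right-hand sides.
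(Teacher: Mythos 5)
Your route is the paper's route: Corollary \ref{corollary5} is obtained by pure specialization of Theorems \ref{theorem2}, \ref{theorem3}, \ref{theorem3.5} and \ref{theorem4} to $\mathcal{P}(x)(1,2,0;2,0,-1)$, and your handling of the index shift $U_n=S_{n-1}$ and of the parity of the exponents of $c=-1$ is exactly what is required. The one step you did not carry out, however, is the one that fails. You take $\Delta_{\mathcal{P}}=-2x^2+1$ as given, but computing the discriminant from its definition with $p=1$, $q=a=2$, $r=b=0$, $c=-1$ gives
\[
\Delta_{\mathcal{P}}=(q^2-apq)x^2+(2qr-apr-bpq)x+(r^2-bpr-cp^2)=(4-4)x^2+0\cdot x+\big(0-0-(-1)\big)=1,
\]
equivalently $S_1^2-S_0S_2=4x^2-(4x^2-1)=1$. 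The value $-2x^2+1$ stated in the paper just before the corollary (apparently from reading $q-apq$ in place of $q^2-apq$; the slip is invisible for the first-kind case since $q=1$ there) is wrong, and with it the corollary itself: for $m=1$, $s=n=0$, $k=1$ the determinant of $\begin{pmatrix}S_0&S_1\\ S_1&S_2\end{pmatrix}$ equals $-1$, not $2x^2-1$ as the first identity would claim.

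Rerunning your own parity bookkeeping with the correct $\Delta_{\mathcal{P}}=1$ yields the corrected prefactors: $(-1)^{\binom{m+1}{2}}$ in the first and second identities, $1$ in the numerator of the third, and $(-1)^{d\binom{d}{2}+\binom{d+1}{3}}$ in the fourth, with every $S$-factor on the right-hand sides unchanged. So the substitution strategy is sound and your $(-1)$-arithmetic is internally consistent, but a blind proof must verify the claimed value of $\Delta_{\mathcal{P}}$ rather than import it from the surrounding text; here that verification is precisely the point at which both the argument and the printed statement break.
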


By Favard's theorem \cite{Favard} (see also the standard reference textbook by Chihara \cite[Chapter 2]{Chihara}), the sequence $\mathcal{P}(x)(1,q,r;1,b,c)=(P_n)_{n\in\mathbb{Z}_{\geq 0}}$ forms a sequence of orthogonal polynomials (with respect to certain linear functional) for $q\neq 0$ and $c\neq 0$. By Theorem \ref{theorem2}, Theorem \ref{theorem3}, Theorem \ref{theorem3.5} and Theorem \ref{theorem4}, we state some determinant identities for matrices containing (powers of) such orthogonal polynomials.

\begin{corollary}\label{corollary6}
Let $n\geq 0$, $m\geq 1$ and $d\geq 1$. Let $(P_n)_{n\in\mathbb{Z}_{\geq 0}}$ be a sequence of orthogonal polynomials of the form: \[P_0\equiv 1, \quad P_1\equiv qx+r,\quad P_{n+2}= (x+b) P_{n+1} +c P_n\]where $c\neq 0$, $q\neq 0$ and $r,b$ are any real numbers. Then 
\begin{equation}
\begin{aligned}\nonumber
\det([P_{n+i+j}^m]_{0\leq i,j\leq m})={}& (-1)^{(n+1)\binom{m+1}{2}}\cdot \Delta^{\binom{m+1}{2}}\cdot c^{n\binom{m+1}{2}+2\binom{m+1}{3}}\cdot \prod_{i=0}^m \Big(\binom{m}{i} U_{i+1}^{2(m-i)}   \Big),\\
 \det \Big( \Big[\prod_{f=j+1}^m P_{n+i+d_f} \prod_{g=1}^j P_{n+i+e_g}\Big]_{0\leq i,j\leq m}   \Big) = {}&(-\Delta)^{\binom{m+1}{2}} (-c)^{n\binom{m+1}{2} +\binom{m+1}{3}} \prod_{l=1}^m U_{l}^{m+1-l}\cdot\prod_{1\leq i\leq j\leq m} (-c)^{d_j} U_{e_i-d_j},\\
\det\Big(\Big[\frac{1}{P_{s+k(n+i+j)}}\Big]_{0\leq i,j\leq m}\Big)={}&\frac{(-1)^{(s+kn)\binom{m+1}{2}} c^{(s+kn)\binom{m+1}{2}+2k\binom{m+1}{3}}\Delta^{\binom{m+1}{2}} \prod_{i=0}^m U_{k(i+1)}^{2(m-i)}   }{\prod_{0\leq i,j\leq m} P_{s+k(n+i+j)}},\\
 \det ([ P_{n+i+j}^{\langle m\rangle}]_{0\leq i,j\leq d-1} )={}&(-1)^{n\binom{d}{2}+\binom{d+1}{3}} c^{(n+d-2)\binom{d}{2}} \Delta^{\binom{d}{2}} \cdot \prod_{i=1}^{d-1} \big( U_{i} U_{m+1-i} \big)^{d-i} \cdot \prod_{i=d-1}^{2(d-1)} P_{n+1}^{\langle m+1-d \rangle} .
\end{aligned}
\end{equation}where $\Delta=(q^2 -q)x^2+(qr-r-bq)x+(r^2-br-c)$ and $(U_n)_{n\in\mathbb{Z}_{\geq 0}}$ is the sequence of orthogonal polynomials defined by \[U_0\equiv 0,\quad U_1\equiv 1,\quad U_{n+2} = (x+b) U_{n+1} + c U_n.\]
\end{corollary}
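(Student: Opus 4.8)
The statement is a direct specialization of Theorems~\ref{theorem2}, \ref{theorem3}, \ref{theorem3.5} and \ref{theorem4} to the parameter choice $p=1$, $a=1$, with $q,r,b,c$ arbitrary subject to $c\neq 0$ and $q\neq 0$. The plan is first to record what the relevant ingredients of the general framework become under this choice, and then to read off each of the four displayed identities from the corresponding general theorem. With $p=1$ and $a=1$, the recurrence $P_0=1$, $P_1=qx+r$, $P_{n+2}=(x+b)P_{n+1}+cP_n$ defining $\mathcal{P}(x)(1,q,r;1,b,c)$ is exactly the one in the statement; by Favard's theorem \cite{Favard} (and since $q\neq 0$, $c\neq 0$) this is an orthogonal polynomial sequence. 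The companion sequence $\mathcal{U}(x)(0,0,1;1,b,c)$ has $U_0=0$, $U_1=0\cdot x+1=1$ and $U_{n+2}=(x+b)U_{n+1}+cU_n$, so it coincides with the sequence $(U_n)$ named in the corollary. Finally, substituting $p=a=1$ into the defining expression for $\Delta_{\mathcal{P}}$ in Definition~1 yields the polynomial $\Delta$ in the statement.

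Next I would read off the four identities one at a time. The first identity is Theorem~\ref{theorem2} with $s=0$ and $k=1$: one substitutes $\Delta_{\mathcal{P}}=\Delta$ and the sequence $(U_n)$ above, and simplifies the exponents (for instance $(-1)^{(s+kn+1)\binom{m+1}{2}}$ collapses to $(-1)^{(n+1)\binom{m+1}{2}}$ and the power of $c$ to $c^{\,n\binom{m+1}{2}+2\binom{m+1}{3}}$). In the same way, the second identity is Theorem~\ref{theorem3} with $s=0$, $k=1$; the third identity is Theorem~\ref{theorem3.5} with general $s,k$, valid under the same nonvanishing hypothesis on the denominators imposed there and in Lemma~\ref{lemma2.5}; and the fourth identity is Theorem~\ref{theorem4}. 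In each case nothing is needed beyond the substitution and routine sign/exponent bookkeeping.

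The one point that deserves care, rather than a genuine obstacle, is that Theorems~\ref{theorem1}--\ref{theorem4} are stated for polynomial sequences indexed over all of $\mathbb{Z}$, whereas an orthogonal polynomial sequence naturally lives on $n\geq 0$. This causes no problem: Definition~1, with $c\neq 0$, already extends $(P_n)$ and $(U_n)$ to negative indices through the backward recurrence $P_n=-\frac{ax+b}{c}P_{n+1}+\frac{1}{c}P_{n+2}$, so the general theorems apply verbatim. One should only check that, under the hypotheses $n\geq 0$, $m\geq 1$, $d\geq 1$, all indices occurring on the right-hand sides (namely $U_{i+1}$, $U_l$, $U_{e_i-d_j}$, $P_{n+1}^{\langle m+1-d\rangle}$, and the factors $U_iU_{m+1-i}$ in the last identity) lie in ranges where the identities are non-degenerate, and --- for the third identity --- that the denominators on both sides are nonzero, as already required in Theorem~\ref{theorem3.5}. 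Thus the main ``difficulty'' here is purely clerical: matching the closed forms produced by Theorems~\ref{theorem2}, \ref{theorem3}, \ref{theorem3.5}, \ref{theorem4} against the four claimed formulas after setting $p=a=1$.
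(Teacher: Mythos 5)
Your proposal is correct and follows exactly the route the paper takes: the paper offers no separate argument for Corollary~\ref{corollary6} beyond citing Theorems~\ref{theorem2}, \ref{theorem3}, \ref{theorem3.5} and \ref{theorem4} for the sequence $\mathcal{P}(x)(1,q,r;1,b,c)$, which is precisely your specialization $p=a=1$ (with $s=0$, $k=1$ in the first two identities), together with the observation that $\mathcal{U}(x)(0,0,1;1,b,c)$ is the stated $(U_n)$. One small point: substituting $p=a=1$ into the discriminant of Definition~1 gives linear coefficient $2qr-r-bq$, whereas the corollary prints $qr-r-bq$, so the statement's $\Delta$ appears to contain a typographical slip rather than any defect in your argument.
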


\end{document}